\documentclass[oneside,a4paper,english]{smfart}
\usepackage[textheight=21cm, textwidth=13cm]{geometry}  
\usepackage[english]{babel}
\author[H. Rajabzadeh]{Hesam Rajabzadeh}
\address{Institute for Research in Fundamental Sciences (IPM), Tehran, Iran.}
\email{rajabzadeh@ipm.ir, rajabzadeh.hesam@gmail.com}

\author[P. Safaee]{Pedram Safaee}
\address{CMLS, École polytechnique, 91128 Palaiseau, France.}
\email{pedram.safaee@polytechnique.fr, pedram.safaee95@gmail.com}

\title[Twisted cocycle for IETs: Invariant structures and Lyapunov spectrum]{Twisted cocycle for interval exchange transformations: Invariant structures and Lyapunov spectrum}
\date{28 January 2025}

  
\usepackage[T1]{fontenc}
\usepackage[utf8]{inputenc}
\usepackage{xspace}

\usepackage{amsmath, amsthm, amsfonts, amssymb, smfthm}
\usepackage{authblk}
\usepackage{cases}
\usepackage{hyperref}
\usepackage{esvect}
\usepackage{bbm}
\usepackage{accents}
\usepackage{graphicx}
\usepackage{caption}
\usepackage{MnSymbol}
\usepackage{relsize}
\usepackage{url}
\usepackage{array}

\usepackage{tikz-cd}

\usepackage{nicematrix}

\newtheorem{thm}{Theorem}[section]
\newtheorem*{thm*}{Theorem}

\newtheorem*{theoremain}{Main Theorem}
\newtheorem*{corollary}{Corollary}

\newtheorem{lemma}[thm]{Lemma}
\newtheorem{cor}[thm]{Corollary}
\newtheorem{rmk}[thm]{Remark}

\newtheorem{definition}[thm]{Definition}
\numberwithin{equation}{section}

\newtheorem{example}{Example}

\usepackage{xcolor}
 \usepackage{hyperref}
          \hypersetup{colorlinks, citecolor=blue, filecolor=blue, linkcolor=black, urlcolor=blue}
     \usepackage{soul}
     \usepackage[numbers]{natbib}

\newcommand{\prealpha}{\alpha^{-1}}

\newcommand{\tw}{\mathcal{B}}

\newcommand{\n}{\vec{\textbf{n}}}

\newcommand{\one}{{\textbf{1}}}
\newcommand{\real}{\boldsymbol{R}}

\newcommand{\vs}{v_{\pi,\eta}^S}

\newcommand{\toral}{\mathcal{R}_{\mathbb{T}^d}}

\newcommand{\w}[1]{w_{\eta}(#1)}

 

\begin{document}

\maketitle
\makeatother

\begin{abstract}
This paper investigates the algebraic and dynamical properties of the twisted cocycle, a \(\mathrm{GL}(d, \mathbb{C}\))-valued cocycle defined over the toral extension of the Zorich (Rauzy-Veech) renormalization for interval exchange transformations (IET). As a natural generalization of the Zorich cocycle, the twisted cocycle plays a central role in studying the asymptotic growth of twisted Birkhoff sums which in turn provide a suitable tool for obtaining fine spectral information about IETs and translation flows such as the local dimension of spectral measures and quantitative weak mixing. Although it shares similarities with the classical (untwisted) Zorich cocycle, structural differences make its analysis more challenging. Our results yield a block-form decomposition into invariant and covariant subbundles allowing us to demonstrate the existence of $\kappa+1$ zero exponents with respect to a large class of natural invariant measures where $\kappa$ is an explicit integer depending on the permutation. We establish the symmetry of the Lyapunov spectrum by showing the existence of a family of non-degenerate invariant symplectic forms. As a corollary, we prove that for rotation-type permutations, the twisted cocycle has a degenerate Lyapunov spectrum with respect to certain natural ergodic invariant measures in contrast with the higher genus case where the existence of at least one positive Lyapunov exponent is guaranteed by previous work of the authors. In the appendix, we apply our result about the invariant section to substitution systems and prove the pure singularity of the spectrum for a substantially large class of substitutions on two letters, while greatly simplifying the proof for some systems previously known to have this property. 
\end{abstract}

\author{Hesam Rajabzadeh}

\section{Introduction}

Interval exchange transformations (IET) are one-dimensional dynamical systems that naturally generalize the notion of rotations on the circle that correspond to interval exchanges on two subintervals. IETs and their generalizations (so-called GIETs) provide simple one-dimensional (rather combinatorial) models that facilitate the study of flows on surfaces. 


A key approach to understanding the dynamics of IETs and translation flows is to analyze the associated renormalization dynamics and linear cocycles. Although recurrence of the Teichmuller geodesic flow orbits to compact sets implies unique ergodicity \cite{MasurIETMeasuredFoliations, VeechGaussmeasure}, the study of the Kontsevich-Zorich (KZ) cocycle over the $\mathrm{SL}(2, \mathbb{R})$-action on the moduli space of abelian differentials plays a fundamental role in establishing quantitative statements regarding the rate of ergodicity (see, for example, \cite{ ForniDeviation,AvilaVianaSimplicity,AthreyaForni}). In this direction, many researchers have worked on relating the Lyapunov spectrum of the KZ cocycle or its discrete-time analog called the Zorich cocycle to the ergodic properties of individual IETs and translation flows (see for example \cite{Zorich97deviation, MMYRothtype, AvilaForniweak}). These are simply instances of the utility of the more general philosophy that there is a dictionary between the dynamical properties of translation flows/IETs and the properties of the associated renormalization dynamics \cite{Katoknomixing, Chaika-Annals, Chaika-Hubert, Skripchenko-Troubetzkoy, Chaika-Eskin-Mobius, Kanigowski-Lemanczyk-Radziwill, ChaikaEskin-self-joining}.


Weak mixing for typical IETs or in typical directions for translation flows has been well studied \cite{KatokStepinWeak,VeechMetricTheory,   NogueiraRudolphTopweak,  Sinai-Ulcigrai,Bufetov-Sinai-Ulcigrai,AvilaForniweak, Ulcigrai-JMD, Ulcigrai-Annals, AvilaDelecroixweak, AvilaLeguil, WeakmixingBilliard}. In
the past 15 years, there has been an increasing interest in the problem of quantitative weak mixing (\cite{BufetovSolomyakHoldergenustwo, BufetovSolomyak-HolderRegularity, ForniTwisted, AvilaForniSafaee, Bufetov-Marshal-Solomyak_Quant.Weakmixing}). A notable development made by Bufetov and Solomyak \cite{BufetovSolomyakHoldergenustwo} in this 
field is the introduction of the twisted cocycle, a relatively new linear cocycle that has recently gained attention. Although this cocycle is defined over the toral (Rauzy-Veech) Zorich cocycle, Forni \cite{ForniTwisted} defined a similar cocycle over the Teichmuller geodesic flow. These cocycles were originally used to attack the problem of quantitative weak mixing for IETs and translation flows. To name a few other applications, we must mention that these cocycles provide an important framework for the study of other spectral problems such as determining the local dimension of spectral measures or the singularity of the spectrum (with respect to the Lebesgue measure) \cite{
Baake2,Baake,Marshall-Maldonado,Solomyak_singularity_S_adic}. In particular, Bufetov and Solomyak \cite{Bufetov-Solomyak-spectralcocycle2020} obtained a precise formula relating the local dimension of spectral measures of certain observables to the ratio of the (pointwise) top Lyapunov exponent of the twisted cocycle to the top exponent of the untwisted cocycle. A corollary of their observation is that the establishment of strict upper bounds for the top exponent of the twisted cocycle implies the singularity of the spectrum \cite{Bufetov-Solomyak-MathZ}. 
For a general introduction to the topic, see the surveys
\cite{BufetovSolomyak2023,Forni2024-survey}. 

The twisted cocycle is a $
\mathrm{GL}(d,\mathbb{C})$-valued cocycle defined over the toral extension of the Zorich (Rauzy-Veech) renormalization transformation. This cocycle bears a resemblance to the (untwisted) ordinary Zorich cocycle in that it allows for studying the asymptotics of twisted Birkhoff sums just as the Zorich cocycle
governs the rate of growth of ordinary Birkhoff sums. However, there are certain differences between the two cocycles that make the study of the twisted one more difficult. This partly explains why many of the properties of this cocycle have not yet been understood, although it has been a subject of
interest for more than a decade. For example, there is no analog of the tautological plane in the twisted realm and even the problem of positivity of the top exponent was only recently resolved in a previous work by the authors (\cite{Rajabzadeh-Safaee}). The authors used a geometric construction to show that when the genus of the associated translation surface is greater than one, the cocycle has at least one positive Lyapunov exponent with respect to natural ergodic invariant measures.

In this paper, we investigate the similarities and differences between the two cocycles and shed more light on the algebraic structure of the twisted cocycle.

\begin{theoremain}\label{thm-A}
Let $d>1$ be an integer, $\pi=(\pi_t, \pi_b)$ an irreducible permutation, and $\mathfrak{R}:=\mathfrak{R(\pi)}$ its Rauzy class. Let $g$ be the genus, and $\kappa$ the number of singularities corresponding to the Rauzy class $\mathfrak{R}$. Then the Lyapunov spectra with respect to the invariant measure $\sum_{\pi \in \mathfrak{R}} \mu_{Z} \vert_{\Delta_\pi} \times m_{\mathbb{T}^{2g}_{\pi}}$ of the twisted cocycles $\mathcal{B}$ and $(\mathcal{B}^{*})^{-1}$ are equal, symmetric with respect to zero, and contain zero with multiplicity $\kappa+1$. That is, the Lyapunov spectra take the following form
\begin{equation}
    \chi_1 \geq \cdots \geq \chi_{g-1}\geq \underbrace{0=\cdots=0}_{\kappa+1}\geq -\chi_{g-1} \geq \cdots \geq-\chi_{1}.
\end{equation}
\end{theoremain}
Although the above statement and its proof apply to all invariant measures defined in Subsection \ref{sec: Twisted Cocycle},  we chose to state the theorem as above for the sake of simplicity of the exposition. A similarity between the twisted cocycle and the ordinary Zorich cocycle is the symmetry of the spectrum. Observe, however, that there are two additional zero exponents, that is, $\kappa+1$ as opposed to $\kappa-1$ in the ordinary case. One of the main differences between the two cocycles is best illustrated in the following immediate corollary of Theorem \ref{thm-A}.
\begin{corollary} Let $d>1$ be an integer and $\pi=(\pi_t, \pi_b)$ be a rotation type permutation. Then, the Lyapunov spectrum of the twisted cocycle with respect to the ergodic invariant measure $\mu \times m_{\mathbb{T}^2}$ is degenerate, i.e., all exponents are equal to zero. 
\end{corollary}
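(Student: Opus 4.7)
The plan is to derive the corollary as an immediate consequence of Theorem \ref{thm-A}. The first step is to recall that a rotation-type permutation $\pi=(\pi_t,\pi_b)$ has associated translation surface of genus $g=1$: the zippered-rectangle suspension of any such $\pi$ is a flat torus with finitely many marked points, and the Rauzy class $\mathfrak{R}(\pi)$ consists entirely of rotation-type permutations. This is precisely consistent with the phrasing of the statement, in which the toral extension is by $\mathbb{T}^{2g}=\mathbb{T}^{2}$.

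The Euler--Poincaré identity $d=2g+\kappa-1$ for irreducible permutations then specializes, with $g=1$, to $\kappa=d-1$, whence $\kappa+1=d$. Plugging $g=1$ and $\kappa=d-1$ into the shape of the spectrum furnished by Theorem \ref{thm-A},
\begin{equation*}
\chi_1 \geq \cdots \geq \chi_{g-1}\geq \underbrace{0=\cdots=0}_{\kappa+1}\geq -\chi_{g-1} \geq \cdots \geq-\chi_{1},
\end{equation*}
the positive and negative lists $\chi_1,\ldots,\chi_{g-1}$ and $-\chi_{g-1},\ldots,-\chi_{1}$ are empty, while the central zero block already contains all $d$ exponents. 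The remark immediately following Theorem \ref{thm-A} extends its conclusion to every invariant measure in the natural class introduced in Subsection \ref{sec: Twisted Cocycle}, so it applies to the ergodic measure $\mu\times m_{\mathbb{T}^2}$ considered in the corollary; hence every Lyapunov exponent of $\mathcal{B}$ with respect to this measure vanishes.

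There is essentially no technical obstacle: the only substantive input is the identification of rotation-type permutations with genus-one data, after which the degeneracy is a purely combinatorial consequence of the explicit spectrum in Theorem \ref{thm-A}. The interest of the statement lies rather in its contrast with the higher-genus case, where the authors' earlier work \cite{Rajabzadeh-Safaee} guarantees the existence of at least one strictly positive Lyapunov exponent for $\mathcal{B}$.
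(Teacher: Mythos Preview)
Your proposal is correct and matches the paper's approach: the paper presents this as an ``immediate corollary'' of Theorem~\ref{thm-A} without a written proof, and your argument---specializing $g=1$, hence $\kappa+1=d$, so all $d$ exponents vanish---is precisely the intended reasoning.
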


Let us outline some of the ideas discussed in the paper. A central difficulty in our work is finding the correct family of matrices $\Omega_{\pi, \zeta}$ satisfying certain desired properties. These matrices, which may be thought of as twisted analogs of the $\Omega_{\pi}$ matrix defined by Veech \cite{VeechMetricTheory}, equip us with the right set of tools to provide a decomposition of the cocycle into block forms and construct nondegenerate invariant symplectic structures. We decompose the trivial $\mathbb{C}^d$-bundle where the twisted cocycle acts to the direct sum of three subbundles depending only on the twist factor $\zeta$ and the combinatorics $\pi$ (everywhere defined except at the zero section of $\Delta \times \mathbb{T}^d$)
\begin{equation}\label{eq:cocycle-decomposition}
    \mathbb{C}v_{\pi, \zeta} \oplus N(\pi, \zeta) \oplus \widetilde{H}(\pi, \zeta)
\end{equation}
satisfying the following  
\begin{enumerate}
    \item $N(\pi, \zeta)=Ker\Omega_{\pi, \zeta}$ is a $(\kappa-1)$-dimensional subbundle spanned by $v^{S}_{\pi, \eta}$ for $S\in \Sigma(\pi) \setminus \{S_0\}$,
    \item the dual cocycle $(\mathcal{B}^{*})^{-1}$ acts unitarily on $N(\pi, \zeta)$,
    \item $v_{\pi, \zeta}$ is an invariant section of the dual cocycle $(\mathcal{B}^{*})^{-1}$,
    \item $s_{\zeta}=\Omega_{\pi, \zeta}v_{\pi, \zeta}$ defines an invariant section (everywhere nonzero except at the origin) for the action of $\mathcal{B}$,
    \item $W_\zeta= \langle s_\zeta\rangle^{\perp}$ is a $(d-1)-$dimensional invariant subbundle for $(\mathcal{B}^{*})^{-1}$
    \item $\widetilde{H}(\pi, \zeta)= \Omega_{\pi, \zeta} W_{\zeta}$ is a $(2g-1)$-dimensional invariant subbundle under the twisted cocycle $\mathcal{B}$,
\end{enumerate}
where $S_0$ is the distinguished singularity corresponding to $0$ (see Subsection \ref{subsec: IETs and translation surfaces}). 

The direct sum in \eqref{eq:cocycle-decomposition} provides a decomposition for the action of the twisted cocycle into invariant and covariant subbundles related to the image and the kernel of the $\Omega_{\pi, \zeta}$ matrices. The decomposition and the items above show similarities and differences between the twisted and untwisted realms. While the decomposition in \eqref{eq:cocycle-decomposition} resembles that of the (untwisted) ordinary cocycle into the direct sum of the kernel and the image of $\Omega_{\pi}$, the difference is the appearance of the covariant section $v_{\pi, \zeta}$ which is not in the kernel of $\Omega_{\pi, \zeta}$. Another important difference between the twisted $\Omega_{\pi,\zeta}$ matrices and the $\Omega_\pi$ matrix is that the invariance of $\Omega_{\pi, \zeta}$'s under renormalization is only true restricted to $W_\zeta$, see Proposition \ref{inv-Omega}. Moreover, the image of $\widetilde{H}(\pi, \zeta)$ admits an invariant section $s_{\zeta}$.  As made apparent by Proposition \ref{prop:symplectic-form}, the suitable twisted analog of $H(\pi)$ is the $(2g-2)$-dimensional bundle $H(\pi, \zeta)$ obtained by quotienting $\widetilde{H}$ by $\mathbb{C}s_{\zeta}$. It turns out that the real bundle corresponding to $H(\pi, \zeta)$ will admit a family of nondegenerate invariant symplectic forms $\omega^{\mathbb{R}}_{\pi, \zeta}$ which finishes the proof of the symmetry statement in Theorem \ref{thm-A}. 

\section*{Acknowledgments} We would like to extend our gratitude to Giovanni Forni and Carlos Matheus for listening to the arguments of the paper and their expository suggestions. We are grateful to Pascal Hubert for his interest in the project.  We thank Meysam Nassiri for the comments that made the exposition better. We appreciate interesting discussions with Boris Solomyak that motivated the result about substitutions in the appendix.

\section{Preliminaries}

In this section, we collect some definitions and tools we need in the forthcoming sections. 

\subsection{IETs and translation surfaces} \label{subsec: IETs and translation surfaces}


Let $d>1$ be an integer, $\mathcal{A}$ be a set of labels (or names) of cardinality $d$, and $\pi:=(\pi_t, \pi_b)$ be a $2$-tuple consisting of bijections $\pi_{\epsilon}: \mathcal{A} \to \{1, 2, \dots, d\}$, $\epsilon\in \{t, b\}$. Then to each length vector $\lambda \in \Delta^{\mathcal{A}}:= 
\{ \lambda \in \mathbb{R}_{+}^{\mathcal{A}}: \sum_{\alpha \in \mathcal{A}} \lambda_\alpha=1\} \subset \mathbb{R}_{+}^{\mathcal{A}}$, there associates an {\it{interval exchange transformation}} (IET) on the unit interval $I:=[0,1)$ given by the following
\[T_{\lambda, \pi}(x)=x-\sum\limits_{\pi_t(\alpha)<\pi_t(\alpha_0)}\lambda_{\alpha}+\sum\limits_{\pi_b(\alpha)
<\pi_b(\alpha_0)}\lambda_{\alpha}, \quad x\in I_{\alpha_0},\]
where $\alpha_0 \in \mathcal{A}$ and
 \[I_{\alpha_{0}}:=
\Big[\sum\limits_{\pi_t(\alpha)<\pi_{t}(\alpha_0)}\lambda_{\alpha}, \sum\limits_{\pi_t(\alpha) \leq \pi_t(\alpha_0)} \lambda_\alpha\Big).\]


A \textit{translation surface} is a surface $X$ obtained from a collection of polygons in the complex plane whose sides come in identified parallel pairs of equal length lying on the opposite sides of the polygons with respect to the natural orientation inherited from $\mathbb{C}$. Two translation surface structures are equivalent if one can be obtained from the other through a sequence of cutting and pasting of triangles. The identification of edges naturally induces an equivalence relation on the set of vertices. Each equivalence class is called a \textit{singularity} and the set of all singularities is denoted by $\Sigma.$ The Euclidean structure on $\mathbb{C}$ gives rise to a flat metric on the surface with singularities at elements of $\Sigma.$ For each $p\in \Sigma$, the total sum of internal angles of the vertices in the class of $p$ is an integer multiple $2\pi (m(p)+1)$ of $2\pi$, where $m(p) \in \mathbb{Z}_{\geq 0}$ is called the \textit{order} of the singularity $p$. Gauss-Bonnet implies the following relation between the genus $g$ of the surface $X$ and the order of singularities
\begin{equation}
    2g-2= \sum_{p\in \Sigma} m(p).
\end{equation}
The \textit{translation flow} associated to $X$ is obtained by moving from each point towards the north direction at unit speed with the caveat that for the finitely many trajectories that lead (in forward or backward time) to a singularity, the flow is not defined for all times.  \\

\noindent
\textbf{IETs as return maps of translation flows.} IETs can be naturally seen as the first return maps to horizontal transverse sections of translation flows on translation surfaces. Let 
\begin{equation}
    T^{+}(\pi):=\bigg\{\tau \in \mathbb{R}^{\mathcal{A}}: \sum_{\pi_{t}(\alpha)< \pi_{t}(\beta)} \tau_{\alpha}>0, \; \sum_{\pi_b(\alpha)> \pi_{b}(\beta)} \tau_{\alpha}<0\bigg\}.
\end{equation}
For $\tau \in T^+(\pi)$ and $\epsilon \in \{t, b\}$ we define the vectors 
\begin{equation}
    \xi^{\epsilon}_{\alpha}= \sum_{\beta: \pi_\epsilon(\beta) \leq \pi_\epsilon(\alpha)} \lambda_\beta+ i \tau_\beta \in \mathbb{C}.
\end{equation}
Now let $P(\lambda, \pi, \tau)$ be the $2d$-gon in the plane whose vertices are 
$\{0, \xi^{\epsilon}_{\alpha}, \epsilon \in \{t, b\}\}$. It is easy to see that the edges of such a polygon come in parallel pairs. Identifying the parallel pairs produces a translation surface $X(\lambda, \pi, \tau)$ with singularities corresponding to the vertices of $P$, which we denote by $\Sigma(\pi)$.

It is well-known since \cite{VeechMetricTheory} that the elements of $\Sigma(\pi)$ are in one-to-one correspondence with the cycles in the cycle decomposition of a certain permutation on $\{0, 1, \dots, d\}$, called the \textit{$\sigma$-permutation}, given by the following relations in terms of $\pi= \pi_b \circ \pi^{-1}_t$:
\begin{equation}
\sigma(i)=
\begin{cases}
    0 & \text{if} \;\;\; 0 \leq i<d,\;\pi(i+1)=1,\\
    \pi^{-1}(\pi(i+1)-1), & \text{if} \;\;\; 0\leq i<d,\;\pi(i+1)\neq 1,\\
    \pi^{-1}(\pi(d)+1)-1 &\text{if} \;\;\; i=d.
\end{cases}
\end{equation}
$\Sigma(\pi)$ has a distinguished element corresponding to $0$, or what is the same, the leftmost vertex of $P(\lambda, \pi, \tau)$. We denote this element by $S_0$.\\




\noindent
\textbf{Zippered rectangles.} 
\textit{Zippered rectangles} is the name of a construction introduced by Veech that creates translation surfaces through suspension of IETs with locally constant roof functions.

Let $\Omega_\pi$ be the following $\mathcal{A}\times \mathcal{A}$ matrix.

\begin{equation}
    (\Omega_{\pi})_{(\alpha, \beta)}:= 
    \begin{cases}
    +1 & \text{if} \; \pi_{b}(\beta)< \pi_b(\alpha), \; \pi_t(\beta)> \pi_t(\alpha),\\
    -1 & \text{if} \; \pi_b(\beta)> \pi_b(\alpha), \; \pi_t(\beta)< \pi_t(\alpha),\\
    0 & \text{otherwise},
    \end{cases}
\end{equation} 



We explain here how the surface $X(\lambda, \pi, \tau)$ can be obtained via identifications of a union of rectangles. Let $h= -\Omega_{\pi}(\tau) \in H^{+}(\pi)$

The surface $X(\lambda, \pi, \tau)$ may alternatively be obtained by applying certain identifications on the union of rectangles of the 
form $I_{\alpha}\times [0,h_\alpha]$. Roughly speaking, the data $\tau$ determine the heights up to which two adjacent rectangles are zipped together in the vertical direction. Regarding the horizontal sides, we 
identify the top edge of the rectangle $I_\alpha \times [0, h_\alpha]$ with the subinterval $T_{\lambda, \pi}(I_\alpha)$ of $I$. For a detailed explanation of this construction and its consequences, we refer the reader to \cite{VeechGaussmeasure}, or \cite{viana2008dynamics}.  

The identifications in the above constructions are given by translations and induce a translation structure on $X(\lambda, \pi, \tau)$ which is equivalent to the one defined previously. The $1$-st homology space $H_1(X, \mathbb{R})$ corresponds to the subspace $H(\pi)$ of dimension $2g$ where $g$, the genus of $X$ is only a function of $\pi$.

If $\kappa:=|\Sigma(\pi)|$ is the number of singularities of $X$ and $N(\pi)$ the kernel of $\Omega_\pi$, then Veech \cite{VeechGaussmeasure} showed that 
\begin{equation} \label{genus and singularities}
    \dim(N(\pi))= \kappa -1, \; d= 2g+ \kappa-1.
\end{equation}

\subsection{Rauzy-Veech and Zorich renormalizations}\label{sec:renormalization}
\smallskip
Let $T_{\lambda^{(0)}, \pi^{(0)}}$ be an IET on the interval $I^{(0)}$ with parameters $(\lambda^{(0)}, \pi^{(0)})$
and $\alpha_{t}, \alpha_{b}$ be the letters with the property that $\pi^{(0)}_{t}(\alpha_{t})=\pi^{(0)}_{b}(\alpha
_{b})=d$. The Rauzy induction algorithm gives a relation between the length vector and combinatorial data of $T_{\lambda^{(0)}, \pi^{(0)}}$ and those of its first return map to the subinterval $I^{(1)}:=[0, 1- \min \{\lambda_{\alpha_t}, \lambda_{\alpha_b}\})$ under the assumption that $\lambda^{(0)}_{\alpha_t} \neq \lambda^{(0)}_{\alpha_b}$.
{
 The resulting transformation $T_{\lambda^{(1)}, \pi^{(1)}}$ is defined as follows
\begin{itemize} 
\item[i)] $\lambda^{(0)}_{\alpha_{t}}> \lambda^{(0)}_{\alpha_{b}}$: then let $\lambda^{(1)}_{\alpha}:=\lambda^{(0)}_{\alpha}$ for all $\alpha \neq \alpha_{t}$, $\lambda^{(1)}_{\alpha_{t}}:= \lambda^{(0)}_{\alpha_{t}}- \lambda^{(0)}_{\alpha_{b}}$, $\pi_{t}^{(1)}:=\pi_{t}^{(0)}$, and
\begin{equation} \label{eq:top}
(\pi^{(1)}_{b})^{-1}(i):=
\begin{cases}
(\pi^{(0)}_{b})^{-1}(i) & \text{if}\; i \leq \pi^{(0)}_{b}(\alpha_t),\\
\alpha_{b} & \text{if}\;
i=\pi^{(0)}_{b}(\alpha_t)+1,\\
(\pi^{(0)}_{b})^{-1}(i-1) & \text{otherwise},
\end{cases}
\end{equation}
in which case $(\lambda^{(0)}, \pi^{(0)})$ is said to be of \textit{top type}
(since $\alpha_t$ ``wins'' against $\alpha_b$).
\item[ii)]$\lambda_{\alpha_{b}}^{(0)}>\lambda_{\alpha_{t}}^{(0)}$: then let $\lambda_{\alpha}^{(1)}:=\lambda_{\alpha}^{(0)}$ for all $\alpha \neq \alpha_{b}$, $\lambda_{\alpha_{b}}^{(1)}:=\lambda_{\alpha_{b}}^{(0)}- \lambda_{\alpha_{t}}^{(0)}$, $\pi_{b}^{(1)}:=\pi_{b}^{(0)}$, and 
\begin{equation}\label{eq:bottom}
(\pi^{(1)}_{t})^{-1}(i):=
\begin{cases}
(\pi^{(0)}_{t})^{-1}(i) & \text{if}\; i \leq \pi^{(0)}_{t}(\alpha_b),\\ 
\alpha_{t} & \text{if}\; i=\pi^{(0)}_{t}(\alpha_b)+1,\\
(\pi^{(0)}_{t})^{-1}(i-1) & \text{otherwise},
\end{cases}
\end{equation}
in which case $(\lambda^{(0)}, \pi^{(0)})$ is said to be of \textit{bottom type}
(since $\alpha_b$ ``wins'' against $\alpha_t$).
\end{itemize}
\begin{figure}
    \centering
    \includegraphics[width=1\linewidth]{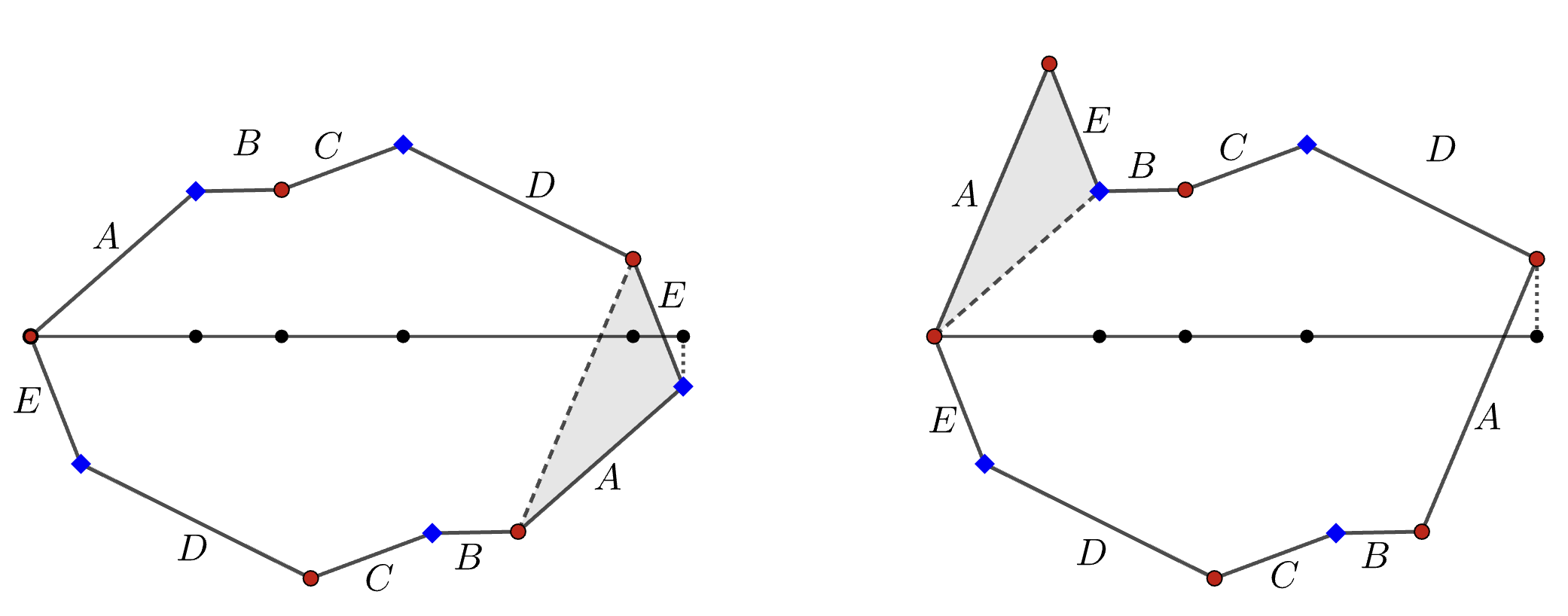}
    \caption{Renormalization of bottom type transforming the permutation $\left(\begin{smallmatrix}
        ABCDE\\
        EDCBA
    \end{smallmatrix}\right)$
    to the permutation 
    $\left(\begin{smallmatrix}
        AEBCD\\
        EDCBA
    \end{smallmatrix}\right).$}
    \label{fig1}
\end{figure}

\noindent
\textbf{Notation.} Henceforth, we denote the image of any object under one step of renormalization by the same symbol with an additional superscript $(1)$. For instance, the image of $(\lambda, \pi, \zeta)$ is denoted by $(\lambda^{(1)}, \pi^{(1)}, \zeta^{(1)})$.\\

The combinatorial data $\pi$ is \textit{irreducible}, 
if there exists no $1\leq k < d$ such that 
$\pi_{t}^{-1}(\{1,\dots,k\})=\pi_{b}^{-1}(\{1,\dots,k\})$. We denote the space of such combinatorial data by $\mathfrak{S}^{0}(\mathcal{A})$. The above operations define an equivalence relation that partitions
the set of combinatorial data into equivalence \textit{Rauzy classes}, denoted by $\mathfrak{R} \subset \mathfrak{S}^{0}(\mathcal{A})$. 

Let $\mathfrak{R}$ be a Rauzy class and $\pi \in \mathfrak{R}$ a permutation. We define $\mathcal{Q}_{R}: \mathbb{R}_{+}^{\mathcal{A}} \times \mathfrak{R} \to 
\mathbb{R}_{+}^{\mathcal{A}} \times \mathfrak{R}$, called \textit{Rauzy 
induction map}, to be the map that sends $(\lambda^{(0)}, \pi^{(0)})$ to $(\lambda^{(1)}, \pi^{(1)})$, according to equations $\eqref{eq:top}$ and $\eqref{eq:bottom}$. Rescaling $\lambda^{(1)}$ back to size $1$ 
(size being the $\ell^{1}$-norm) yields a transformation $\mathcal{R}_{R}: 
\Delta^{\mathcal{A}} \times \mathfrak{R} \to \Delta^{\mathcal{A}} \times \mathfrak{R}$, which we call the \textit{Rauzy renormalization map}.

Zorich defined an accelerated version of the Rauzy renormalization, which is now called \textit{Zorich transformation}.

\begin{definition}
For an element $(\lambda, \pi) \in \Delta^{\mathcal{A}} \times \mathfrak{R}$, let    $n:=n(\lambda, \pi)$ be the smallest 
 $n\in \mathbb{N}$ for which the type of $(\lambda^{(n)}, \pi^{(n)})=\mathcal{Q}_{R}^{n}(\lambda, \pi)$ is different from that of $(\lambda, \pi)$. Then, the Zorich induction, $\mathcal{Q}_{Z}(\lambda, \pi)$ is defined by 
\begin{equation}
    \mathcal{Q}_{Z}(\lambda, \pi):=(\lambda^{(n)}, \pi^{(n)}),
\end{equation} 
and the Zorich renormalization is defined by rescaling the intervals back to size $1$
\begin{equation}
\mathcal{R}_Z(\lambda,\pi):=\big(\frac{\lambda^{(n)}}{|\lambda^{(n)}|},\pi^{(n)}\big).
\end{equation}
\end{definition}
Zorich showed the following
\begin{thm} (\cite{ZorichGaussmeasure}) Let $\mathfrak{R} \subset \mathfrak{S}^{0}(\mathcal{A})$ be a Rauzy class. Then $\mathcal{R}_{Z}|_{\Delta^{\mathcal{A}} \times \mathfrak{R}}$ admits a unique ergodic invariant probability measure $\mu_{Z}$ equivalent to the Lebesgue measure.
\end{thm}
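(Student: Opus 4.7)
The plan is to derive this from Veech's earlier theorem that the Rauzy-Veech renormalization $\mathcal{R}_R$ on $\Delta^{\mathcal{A}} \times \mathfrak{R}$ admits an absolutely continuous invariant measure $\mu_R$ that is equivalent to the Lebesgue measure and is ergodic, but $\sigma$-finite with infinite total mass (the density blows up along the locus where $\lambda_{\alpha_t}=\lambda_{\alpha_b}$). The acceleration $\mathcal{R}_Z$ is specifically engineered to cure this defect.

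First I would realise $\mathcal{R}_Z$ as a first-return map of $\mathcal{R}_R$ to a suitable subset $\Xi$ of finite $\mu_R$-measure. The natural candidate is
\begin{equation*}
\Xi := \bigl\{(\lambda, \pi) \in \Delta^{\mathcal{A}} \times \mathfrak{R} : \textrm{type}(\lambda, \pi) \neq \textrm{type}(\mathcal{Q}_R(\lambda, \pi))\bigr\},
\end{equation*}
the set of switching points. By the very definition of $n(\lambda, \pi)$, the Zorich map sends each switching point to the next one, so that $\mathcal{R}_Z|_\Xi$ coincides, after the standard rescaling, with the first-return map of $\mathcal{R}_R$ to $\Xi$. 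The restriction $\mu_R|_\Xi$ is then automatically invariant under this return map. A direct estimate using Veech's explicit formula for the density of $\mu_R$, combined with the geometry of the cylinders where either $\alpha_t$ or $\alpha_b$ wins $k$ consecutive times, gives $\mu_R(\Xi) < \infty$. Normalising yields the desired probability measure $\mu_Z$, and equivalence to Lebesgue is inherited from $\mu_R$.

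Next I would transfer ergodicity and uniqueness from the Veech setting to the Zorich setting. Ergodicity of $\mathcal{R}_R$ with respect to $\mu_R$ implies that $\mu_R$-almost every orbit visits $\Xi$ infinitely often (indeed, same-type cascades cannot last forever on a.e.\ orbit, as this would contradict the recurrence of the Rauzy cocycle), and the standard Kac/first-return machinery then yields ergodicity of $\mathcal{R}_Z$ with respect to $\mu_Z$. For uniqueness, any other ergodic invariant probability measure $\nu$ equivalent to Lebesgue would in particular be absolutely continuous with respect to $\mu_Z$; two ergodic invariant probability measures with $\nu \ll \mu_Z$ must coincide.

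The main technical obstacle is the finite-mass estimate $\mu_R(\Xi) < \infty$: one must match Veech's explicit density against the combinatorics of the Rauzy cocycle so that the contributions from the infinitely many levels of cascading same-type iterations telescope into a convergent sum. A secondary, more routine point is to verify that $\mathcal{R}_Z$ is defined on a full-$\mu_R$-measure set (i.e.\ that $\lambda^{(n)}_{\alpha_t} \neq \lambda^{(n)}_{\alpha_b}$ at every stage, for $\mu_R$-a.e.\ point), which follows from $\sigma$-finiteness and ergodicity of $\mu_R$ together with the fact that the equal-length locus has measure zero.
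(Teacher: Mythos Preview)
The paper does not prove this statement: it is quoted as a background result from \cite{ZorichGaussmeasure} in the preliminaries, with no argument given. So there is no ``paper's own proof'' to compare against.

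That said, your sketch is essentially the standard route (and the one in Zorich's original paper): realise $\mathcal{R}_Z$ as the first-return map of $\mathcal{R}_R$ to a subset of finite $\mu_R$-measure, then invoke the Kac/induced-transformation machinery. One small slip: with your choice $\Xi = \{(\lambda,\pi) : \text{type}(\lambda,\pi) \neq \text{type}(\mathcal{R}_R(\lambda,\pi))\}$, the Zorich map restricted to $\Xi$ is \emph{not} the first-return map to $\Xi$. Indeed, on $\Xi$ one has $n(\lambda,\pi)=1$, so $\mathcal{R}_Z|_\Xi = \mathcal{R}_R|_\Xi$, and there is no reason for $\mathcal{R}_R(\lambda,\pi)$ to lie in $\Xi$ again. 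The correct cross-section is the set of points whose Rauzy \emph{preimage} had the opposite type (equivalently, $\mathcal{R}_R(\Xi)$ in your notation); on that set, the Zorich map genuinely coincides with the first return of $\mathcal{R}_R$. With this fix the rest of your outline goes through.
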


\subsection{Rauzy-Veech and Zorich cocycles}
The {\it Rauzy-Veech cocycle} is a linear cocycle that gives the action of Rauzy induction on the roof functions in the zippered rectangles construction, that is, for $(\lambda,\pi)$, $B^{R}(\lambda, \pi)$ is the $\mathcal{A}\times \mathcal{A}$ matrix given by the following formula
\begin{equation}
    B^{R}(\lambda, \pi):=
    \begin{cases}
    I+ E_{\alpha_{b}\alpha_{t}} & \text{if} \; (\lambda, \pi)\; \text{is of top type,}\\
    I+ E_{\alpha_{t}\alpha_{b}} & \text{if} \; (\lambda, \pi) \; \text{is of bottom type,}
    \end{cases}
\end{equation}
where for all $\alpha, \beta \in \mathcal{A}$ 
the symbol $E_{\alpha\beta}$ denotes the $\mathcal{A}\times \mathcal{A}$ elementary matrix with a single non-zero entry equal to $1$ in position $(\alpha,\beta)$.

With the above definition, $(\mathcal{R}_{R}, B^{R})$ forms an integral
cocycle. The corresponding cocycle over the Zorich transformation is called the {\it Zorich cocycle} and is denoted by $(\mathcal{R}_{Z},B^{Z})$. \\


\noindent
\textbf{Invariant structures.} 
Veech \cite{VeechGaussmeasure} showed that the Rauzy induction extends to an invertible dynamical system 
(natural extension), called {\it Rauzy--Veech induction}, on the space of zippered rectangles. Moreover, the locally constant subbundles $H(\pi), N(\pi)$ are respectively invariant and covariant under the {\it Rauzy--Veech cocycle}. That is, for $\mathcal{R}_{R}(\lambda, \pi)=(\lambda^{(1)}, \pi^{(1)})$, we have
\begin{align}
    B^{R}(\lambda, \pi)\cdot H(\pi)=H(\pi^{(1)}), \\
    {}^t\!B^{R}(\lambda, \pi)\cdot N(\pi^{(1)})=N(\pi)\,.
\end{align}

The matrix $\Omega_\pi$ provides a symplectic intersection pairing on $H(\pi)$ (corresponding to the Homology intersection pairing on $H_1(X, \mathbb{R})$). It is well known that the Zorich cocycle preserves $\Omega_{\pi}$, that is, 
\begin{equation}
    B^{R}(\lambda, \pi) \Omega_{\pi}{}^t\!B^{R}(\lambda, \pi)= \Omega_{\pi^{(1)}}.
\end{equation}
This implies that the spectrum of the Zorich cocycle restricted to the subspace $H(\pi)$ is symmetric with respect to zero. 

\subsection{Toral and twisted cocycles} \label{sec: Twisted Cocycle}
The main subject of our study, the twisted cocycle, is a complex cocycle defined over the toral version of the Rauzy-Veech and Zorich cocycles. The Rauzy-Veech and Zorich cocycles being integer-valued naturally induce an action by linear automorphisms on the $d$-torus $\mathbb{T}^d= \mathbb{R}^d/\mathbb{Z}^{d}$, which we call the \textit{toral 
Rauzy--Veech cocycle} and the \textit{toral Zorich cocycle}, respectively. So the toral Rauzy-Veech cocycle is given by 
\begin{align}
 \toral^{R}:&\Delta \times \mathfrak{R} \times \mathbb{T}^{d} \longrightarrow  \Delta \times \mathfrak{R} \times \mathbb{T}^{d},
\\
& (\lambda,\pi, \zeta)\longmapsto (\mathcal{R}_{R}(\lambda, \pi), B^R(\lambda, \pi)\zeta).
\end{align}
and its accelerated version by \[\toral^Z(\lambda,\pi, \zeta):= (\mathcal{R}_{Z}(\lambda, \pi), B^Z(\lambda, \pi)\zeta).\] 
For an element $(\lambda, \pi, \zeta) \in \Delta \times \mathfrak{R} \times \mathbb{T}^{d}$, we define the corresponding twisted Rauzy-Veech matrix by
\begin{equation} 
    \tw^{R}(\lambda, \pi, \zeta):=
    \begin{cases}
    I+ \exp(2\pi i \zeta_{\alpha_b})E_{\alpha_{b}\alpha_{t}}& \text{if} \; (\lambda, \pi)\; \text{is of top type},\\
    I+ E_{\alpha_{t}\alpha_{b}} +(-1+ \exp(2\pi i 
    \zeta_{{\alpha}_b})) E_{\alpha_t \alpha_t} & \text{if} \; (\lambda, \pi) \; \text{is of bottom type.}
    \end{cases}
\end{equation}
The corresponding twisted Zorich matrix is defined by
\begin{equation}
    \tw^{Z}(\lambda, \pi, \zeta):= \prod\limits_{k=0}^{n(\lambda, \pi)-1} \tw^{R}((\toral^{R})^{j}(\lambda, \pi, \zeta)),
\end{equation}
and the corresponding twisted Zorich cocycle is  $(\mathcal{R}^Z_{\mathbb{T}^d},\mathcal{B}^Z)$.  
The definition of the twisted Rauzy-Veech matrix depends solely upon the equivalence class of $\zeta$ mod $\mathbb{Z}^{d}$.\\

\noindent
\textbf{Invariant measures.}
Let $k>1$ be an arbitrary integer and $Q_k \subset \mathbb{T}^d$ be the set of non-zero rational points of denominator $k$, that is, 
\begin{equation}\label{eq:def-Qp}
Q_k:=\bigg\{\frac{\n}{k} \; : \; \n \in \mathbb{Z}^d, \n\neq 0\bigg\} \subset \mathbb{R}^d/\mathbb{Z}^d.   
\end{equation}

The subspace $H(\pi)$ is defined over $\mathbb{Q}$ and therefore $H(\pi) \cap \mathbb{Z}^d \subset H(\pi)$ is a lattice. We denote by $\mathbb{T}^{2g}_{\pi}$ the torus obtained by quotienting $H(\pi)$ by this lattice. We let $Q^{(\pi)}_{k} \subset H(\pi)/(H(\pi) \cap \mathbb{Z}^d)$ be the set of nonzero rational points with denominator $k$ in $\mathbb{T}^{2g}_\pi$. We let $\nu_{k}, \nu^{(\pi)}_{k}$ be the atomic measures that give equal weight to the elements of $Q_k$ and $Q^{(\pi)}_{k}$, respectively. Let $m_{\mathbb{T}^{2g}_{\pi}}$ be the Lebesgue measure on $\mathbb{T}^{2g}_{\pi}$ normalized to be a probability measure. It can be readily verified that the measures
\begin{itemize}
    \item $\sum_{\pi \in 
    \mathfrak{R}}\mu_{Z}\big\vert_{\Delta_{\pi}} \times m_{\mathbb{T}^{2g}_{\pi}}$, $\mu_Z \times m_{\mathbb{T}^d}$, 
    \item $\sum_{\pi \in 
    \mathfrak{R}}\mu_{Z}\big\vert_{\Delta_{\pi}} \times \nu^{(\pi)}_k$, $\mu_Z \times \nu_k,$
\end{itemize}
are invariant under $\toral^{Z}$.

\section{Invariant structures}\label{sec:inv-structure}
This section is devoted to the study of algebraic structures invariant under the twisted cocycle. We construct invariant and covariant sections as well as invariant Hermitian forms. A part of this section is devoted to the construction of a real cousin of the twisted cocycle that is proved to admit an invariant symplectic structure. 






\subsection{Invariant and covariant sections} 
We observe that the "discrete twisted derivative" of the function $1$ for the interval exchange $T_{\lambda, \pi}$ gives an invariant section for the action of the twisted cocycle. Below for any vector $\zeta \in \mathbb{R}^\mathcal{A}$, we let $z_\alpha=\exp(2\pi i \zeta_\alpha)$.

\begin{lemma}\label{lem:inv-section}
    Let $d>1$ and $\pi=(\pi_t, \pi_b)$ be an irreducible $d-$permutation. Then there exists a section $s_{\zeta} \in \mathbb{C}^{\mathcal{A}}$ given by
    \begin{equation}\label{eq:def-inv-bundle}
    (s_{\zeta})_{\alpha}= 1- z_\alpha,
    \end{equation}
    that satisfies the following invariance condition
    \begin{equation}
        \mathcal{B}(\lambda, \pi, \zeta).s_{\zeta}= s_{\zeta^{(1)}}.
    \end{equation}
\end{lemma}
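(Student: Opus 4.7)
The statement reduces to a direct verification that, under one step of Rauzy--Veech induction, the prescribed section transforms correctly. Since the Zorich cocycle is obtained by composing finitely many Rauzy--Veech steps, it suffices to prove the invariance at the level of $\mathcal{B}^R$, so I would treat the two cases (top type and bottom type) separately.

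First, I would record how $\zeta^{(1)}$ depends on $\zeta$. Because the toral Rauzy--Veech action is $\zeta^{(1)} = B^R(\lambda,\pi)\zeta \bmod \mathbb{Z}^d$, and $B^R = I + E_{\alpha_b\alpha_t}$ in the top case and $B^R = I + E_{\alpha_t\alpha_b}$ in the bottom case, only one coordinate of $\zeta$ changes. Concretely, in the top case $\zeta^{(1)}_{\alpha_b} = \zeta_{\alpha_b}+\zeta_{\alpha_t}$, and in the bottom case $\zeta^{(1)}_{\alpha_t}=\zeta_{\alpha_t}+\zeta_{\alpha_b}$, with all other coordinates preserved. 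Writing $z_\alpha = \exp(2\pi i \zeta_\alpha)$, the key multiplicative identity is $z^{(1)}_{\alpha_b}=z_{\alpha_b}z_{\alpha_t}$ in the top case, and similarly on the $\alpha_t$ coordinate in the bottom case.

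Next I would carry out the two matrix-vector multiplications. For the top type, $\mathcal{B}^R s_\zeta = s_\zeta + z_{\alpha_b}(s_\zeta)_{\alpha_t} e_{\alpha_b}$; the coordinates $\alpha \neq \alpha_b$ are already of the right form, and the $\alpha_b$ coordinate is
\[
(1-z_{\alpha_b}) + z_{\alpha_b}(1-z_{\alpha_t}) \;=\; 1 - z_{\alpha_b}z_{\alpha_t} \;=\; (s_{\zeta^{(1)}})_{\alpha_b},
\]
which is exactly the telescoping identity $1-ab = (1-a)+a(1-b)$. For the bottom type, $\mathcal{B}^R$ has an extra diagonal perturbation on the $\alpha_t$ entry; isolating the $\alpha_t$ coordinate gives
\[
(1-z_{\alpha_t}) + (1-z_{\alpha_b}) + (z_{\alpha_b}-1)(1-z_{\alpha_t}) \;=\; 1 - z_{\alpha_t}z_{\alpha_b},
\]
after cancelling like terms, which matches $(s_{\zeta^{(1)}})_{\alpha_t}$. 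All other coordinates are trivially unchanged.

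There is no genuine obstacle here: the proof is a two-case algebraic check, and the only subtlety worth flagging is conceptual, namely the motivation hinted at by the author (the ``discrete twisted derivative'' of the constant function $1$) that led to the correct ansatz $(s_\zeta)_\alpha = 1 - z_\alpha$. Once this ansatz is on the table, the telescoping identity above makes the verification essentially automatic, and the passage from $\mathcal{B}^R$ to $\mathcal{B}^Z$ is a trivial induction on the number of Rauzy--Veech steps used in the Zorich acceleration.
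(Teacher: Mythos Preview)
Your proposal is correct and is exactly the direct computation the paper has in mind: the paper's own proof reads ``The proof follows from a simple computation which we leave to the reader,'' and your two-case verification (using the telescoping identity $1-ab=(1-a)+a(1-b)$) is precisely that computation carried out in full.
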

\begin{proof}
    The proof follows from a simple computation which we leave to the reader. 
\end{proof}
\begin{rmk}
    After explaining our observation regarding the existence of the invariant section in the setting of IETs to Boris Solomyak, he pointed out that this observation can be extended to substitution systems as well (see appendix, \cite{solomyak_twisted_cocycle}).
\end{rmk}

Let $W_{\zeta^{(1)}}= \langle s_{\zeta^{(1)}}\rangle^{\perp}$ be the subspace perpendicular to the invariant section. The invariance of $s_\zeta$ under $\mathcal{B}(\lambda, \pi, \zeta)$ implies the invariance of $W_{\zeta}$ under $\mathcal{B}^{*}(\lambda, \pi, \zeta)$, that is,
\begin{equation}\label{eq:inv-W}
\mathcal{B}^{*}(\lambda, \pi, \zeta)W_{\zeta^{(1)}}=W_{\zeta}.
\end{equation}


\noindent
\textbf{Standing assumption.} From now on we assume that $\zeta\in H(\pi) \setminus \mathbb{Z}^d$ does not belong to the integer lattice and let $\eta \in \mathbb{R}^\mathcal{A}$ be such that $-\Omega_\pi \eta= \zeta$ (when $\eta$ is not unique we pick one arbitrarily).

For every $\beta \in \mathcal{A}$ we let 
\begin{equation}
    \w{\beta}:= \exp(-2\pi i \eta_\beta),
\end{equation}
and define $c_{\pi,\eta}:\{0\} \cup \mathcal{A} \to \mathbb{C}$ by
\begin{equation}
 c_{\pi,\eta}(\alpha):= \prod\limits_{\pi_t(\beta)\leq \pi_t(\alpha)}\w{\beta}^{-1}.
\end{equation}

\begin{figure}[h]
    \centering
    \includegraphics[width=0.5\linewidth]{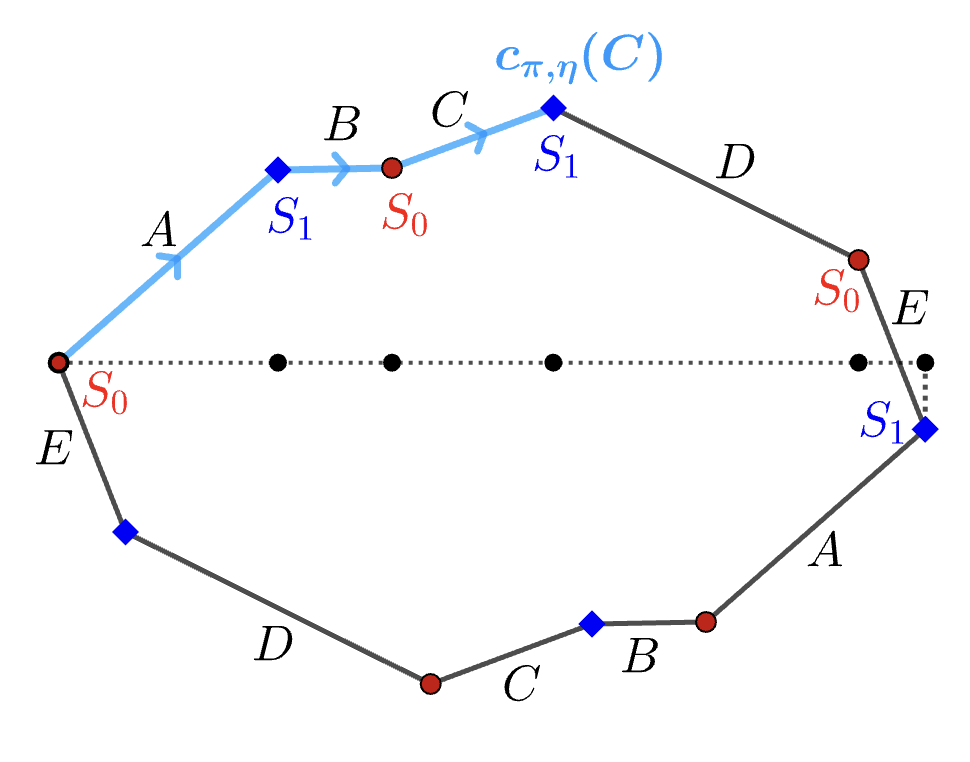}
    \caption{$S_0$ is the distinguished singularity and $c_{\pi, \eta}(C)$ is the amount of twist (induced by $-\eta$) along the colored edges.}
    \label{fig2}
\end{figure}

Given a cycle $S$ in the cycle decomposition of the permutation $\sigma$, let $\vs \in \mathbb{C}^{\mathcal{A}}$ be given by 
\begin{equation}\label{eq:def-v_S}
    \vs(\alpha)= c_{\pi, \eta}(\prealpha)\one_S(\prealpha)- c_{\pi, \eta}(\alpha)\one_S(\alpha),
\end{equation}
where $\alpha^{-1}$ is the symbol of the interval immediately to the left of $I^{t}_{\alpha}$, and is $0$ when $\pi_t(\alpha)=1$. When $S=S_0$ is the distinguished element of $\Sigma(\pi)$, we drop the superscript and denote the associated vector by $v_{\pi,\zeta}$.
\begin{lemma}\label{lem:inv-v_S} Let $\eta , \eta' \in \mathbb{R}^\mathcal{A}$ be such that $\eta-\eta' \in N(\pi)$, then for any cycle $S$ in the cycle decomposition of $\sigma$, there exists $u(\eta, \eta', S) \in \mathbb{C}$ of unit norm such that $\vs= u(\eta, \eta', S) v^{S}_{\pi,\eta'}$. 
\end{lemma}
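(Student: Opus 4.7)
The plan is to reduce the lemma to a combinatorial statement about the partial sum function
\[
F_\delta(j) := \sum\limits_{\pi_t(\beta) \leq j} \delta_\beta, \qquad j \in \{0,1,\ldots,d\},
\]
namely that if $\delta := \eta - \eta' \in N(\pi)$, then $F_\delta$ is constant on each cycle of $\sigma$. Granting this, observe that $c_{\pi,\eta}(\alpha) = \exp(2\pi i F_\eta(\pi_t(\alpha)))$ and $c_{\pi,\eta}(0) = 1$, so the ratio $c_{\pi,\eta}(j)/c_{\pi,\eta'}(j) = \exp(2\pi i F_\delta(j))$ becomes a fixed unit complex number on each cycle $S$, which we take as $u(\eta,\eta',S)$; substituting term by term into the defining formula of $\vs$ then yields $\vs = u(\eta,\eta',S)\, v^{S}_{\pi,\eta'}$.

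The reduction rests on a short algebraic identity obtained directly from the definition of $\Omega_\pi$:
\[
(\Omega_\pi \delta)_\alpha = G_\delta(\pi_b(\alpha)) - F_\delta(\pi_t(\alpha)),
\]
where $G_\delta(j) := \sum_{\pi_b(\beta) \leq j}\delta_\beta$. Indeed, splitting the defining sum of $(\Omega_\pi\delta)_\alpha$ according to whether $\pi_t(\beta)\leq \pi_t(\alpha)$ and $\pi_b(\beta)\leq \pi_b(\alpha)$, two double-counted terms cancel and the stated telescoping remains. Consequently, the condition $\delta \in N(\pi)$ is equivalent to $F_\delta(\pi_t(\alpha)) = G_\delta(\pi_b(\alpha))$ for every $\alpha \in \mathcal{A}$.

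With this in hand, the main step is to verify that $F_\delta(\sigma(i)) = F_\delta(i)$ for every $i \in \{0,\ldots,d\}$, which immediately implies that $F_\delta$ is constant on each $\sigma$-cycle. This splits into three cases matching the three clauses defining $\sigma$: (i) $\pi(i+1) = 1$, where the claim reduces to $F_\delta(i) = G_\delta(0) = 0$; (ii) the generic case $\sigma(i) = \pi^{-1}(\pi(i+1)-1)$ with $i<d$, where one applies the identity of the previous step to the letter $\alpha := \pi_t^{-1}(i+1)$, writes $G_\delta(\pi(i+1)-1) = G_\delta(\pi(i+1)) - \delta_\alpha$, and then applies the identity again to the letter $\pi_t^{-1}(\sigma(i))$; (iii) the case $i=d$, handled similarly with $\alpha := \pi_t^{-1}(d)$. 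The main obstacle lies in the bookkeeping for case (iii), where one must add and subtract $\delta_{\pi_t^{-1}(j)}$ for $j := \pi^{-1}(\pi(d)+1)$ to align $F_\delta(j-1) = F_\delta(\sigma(d))$ with $G_\delta(\pi(d)) = F_\delta(d)$; the remainder of the argument is routine telescoping and carries no conceptual difficulty beyond the setup identified in the first paragraph.
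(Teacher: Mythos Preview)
Your proof is correct and follows essentially the same idea as the paper: both arguments boil down to showing that the ratio $c_{\pi,\eta}(\cdot)/c_{\pi,\eta'}(\cdot)$ is constant on each $\sigma$-cycle, which amounts to the partial-sum function $F_\delta$ being $\sigma$-invariant when $\delta=\eta-\eta'\in N(\pi)$. The paper's proof simply asserts that $\Omega_\pi\eta=\Omega_\pi\eta'$ forces $\sum_{\pi_t(\alpha)\le\pi_t(\theta)\le\pi_t(\beta)}\eta_\theta=\sum_{\pi_t(\alpha)\le\pi_t(\theta)\le\pi_t(\beta)}\eta'_\theta$ for $\alpha,\beta$ in the same cycle and concludes immediately, whereas you supply a self-contained derivation of this fact via the identity $(\Omega_\pi\delta)_\alpha=G_\delta(\pi_b(\alpha))-F_\delta(\pi_t(\alpha))$ and a case analysis of $\sigma$; so your version is more explicit but not genuinely different in strategy.
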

\begin{proof}
    Assume that $\alpha, \beta \in S$ are two symbols with $\pi_t(\alpha) < \pi_t(\beta)$. The equality $-\Omega_\pi(\eta)=-\Omega_{\pi}(\eta')$ implies that 
    \begin{equation}
        \sum_{\pi_t(\alpha)\leq \pi_t(\theta) \leq \pi_t(\beta)} \eta_\theta=  \sum_{\pi_t(\alpha)\leq \pi_t(\theta) \leq \pi_t(\beta)} \eta'_\theta
    \end{equation}
    and therefore 
    \begin{equation}
        \frac{c_{\pi,\eta}(\beta)}{c_{\pi,\eta}(\alpha)}= \frac{c_{\pi, \eta'}(\beta)}{c_{\pi,\eta'}(\alpha)}.
    \end{equation}
    Taking $u(\eta, \eta',S)= \frac{c_{\pi,\eta}(\alpha)}{c_{\pi,\eta'}(\alpha)}$ for any $\alpha \in S$, gives the desired result.  
\end{proof}

\begin{prop}(Covariant sections)\label{prop:N-span}
    Let $d>1$ be a positive integer and $\mathfrak{R}$ and $\pi=(\pi_t,\pi_b)$ a $d$-permutation. Assume that $\zeta, \eta, \vs$ are as above. Then
    \begin{equation}\label{eq:covariant-sections}
         (\mathcal{B}(\lambda, \pi, \zeta)^{*})^{-1} \vs= v^{S^{(1)}}_ {\pi^{(1)},\eta^{(1)}}.
    \end{equation}
\end{prop}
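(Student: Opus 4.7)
The proof proceeds by explicit case analysis on the type of $(\lambda,\pi)$. In the top case, $\mathcal{B}(\lambda,\pi,\zeta)=I+\exp(2\pi i\zeta_{\alpha_b})E_{\alpha_b\alpha_t}$, so $(\mathcal{B}^{*})^{-1}=I-\exp(-2\pi i\zeta_{\alpha_b})E_{\alpha_t\alpha_b}$. This operator leaves every component of any vector invariant except the $\alpha_t$-component, which becomes $v_{\alpha_t}-\exp(-2\pi i\zeta_{\alpha_b})v_{\alpha_b}$. In the bottom case a direct inversion shows that both the $\alpha_t$- and $\alpha_b$-components are modified, but again in a controlled two-dimensional way. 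In either case the verification reduces to an equality at one or two distinguished components, while the remaining components are automatically equal.

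To pin down the right-hand side I would fix the convention $\eta^{(1)}:=({}^t\!B^{R}(\lambda,\pi))^{-1}\eta$. Since $B^{R}$ has integer entries, $\eta^{(1)}$ is a real vector, and the identity $B^{R}\Omega_\pi\,{}^t\!B^{R}=\Omega_{\pi^{(1)}}$ combined with $\zeta^{(1)}=B^{R}\zeta$ yields $-\Omega_{\pi^{(1)}}\eta^{(1)}=\zeta^{(1)}$, as required. Any other admissible $\eta^{(1)}$ differs by an element of $N(\pi^{(1)})$, and by Lemma~\ref{lem:inv-v_S} this merely rescales $v^{S^{(1)}}_{\pi^{(1)},\eta^{(1)}}$ by a unit, so it suffices to prove the identity for this canonical choice. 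Unpacking the definition in the top case gives $\eta^{(1)}_\alpha=\eta_\alpha$ for all $\alpha\neq\alpha_t$ while $\eta^{(1)}_{\alpha_t}=\eta_{\alpha_t}-\eta_{\alpha_b}$; this yields $c_{\pi^{(1)},\eta^{(1)}}(\alpha)=c_{\pi,\eta}(\alpha)$ for every $\alpha$ with $\pi_t(\alpha)<d$, and $c_{\pi^{(1)},\eta^{(1)}}(\alpha_t)=c_{\pi,\eta}(\alpha_t)\w{\alpha_b}$.

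The combinatorial heart of the argument, and the main obstacle, is identifying the cycle $S^{(1)}$ of $\sigma^{(1)}$ that corresponds to a given cycle $S$ of $\sigma$. A careful inspection of the defining formula for the $\sigma$-permutation together with the update rules for $\pi^{(1)}$ shows that $\sigma^{(1)}$ differs from $\sigma$ only at the two or three indices touched by the Rauzy--Veech step. Consequently each cycle of $\sigma$ maps bijectively to a cycle of $\sigma^{(1)}$ representing the same singularity of $X(\lambda,\pi,\tau)$, and in particular $S_0$ is sent to the cycle containing the vertex $0$ under $\sigma^{(1)}$. I would then check explicitly that this bijection preserves the indicator function at every label outside one controlled site.

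With the above in hand, the identity reduces to a finite component-wise check. For $\alpha\neq\alpha_t$ in the top case, equality of the two sides is immediate because $(\mathcal{B}^{*})^{-1}$ acts trivially, $c_{\pi^{(1)},\eta^{(1)}}=c_{\pi,\eta}$ on the indices involved, the predecessor map $\alpha\mapsto\alpha^{-1}$ is unchanged (since $\pi_t^{(1)}=\pi_t$), and the cycle bijection preserves indicators. For $\alpha=\alpha_t$ the identity becomes a linear relation among $c_{\pi,\eta}((\alpha_t)^{-1})$, $c_{\pi,\eta}(\alpha_t)$, $c_{\pi,\eta}((\alpha_b)^{-1})$ and $c_{\pi,\eta}(\alpha_b)$; it collapses after using the telescoping relation $c_{\pi,\eta}(\alpha_b)=c_{\pi,\eta}((\alpha_b)^{-1})\w{\alpha_b}^{-1}$ together with the precise membership compatibility between $S$ and $S^{(1)}$ at the four sites $\alpha_t,\alpha_b,(\alpha_t)^{-1},(\alpha_b)^{-1}$, with special attention to the corner case $S=S_0$ and $\pi_t((\alpha_b)^{-1})=1$ where the convention $\alpha^{-1}=0$ must be handled. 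The bottom case is treated by the same strategy, yielding an analogous but slightly longer telescoping computation.
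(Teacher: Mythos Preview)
Your proposal is correct and follows essentially the same route as the paper: both proceed by case analysis on the Rauzy type, compute $(\mathcal{B}^*)^{-1}$ explicitly, fix the canonical choice $\eta^{(1)}=({}^t\!B^R)^{-1}\eta$, track how $c_{\pi,\eta}$ and $\one_S$ transform, and then verify the identity coordinate by coordinate, with the only nontrivial check occurring at the one or two distinguished labels. Your additional remark invoking Lemma~\ref{lem:inv-v_S} to neutralize the ambiguity in $\eta^{(1)}$ is a small extra care the paper omits (it simply works with the canonical choice), but otherwise the arguments coincide.
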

\begin{proof}
We treat the case of bottom and top operations separately. \\

\noindent
\textbf{Top operation.} In this case
\(
\mathcal{B}(\lambda, \pi, \zeta)= Id+ z_{\alpha_b} E_{\alpha_b \alpha_t},   
\) and therefore 
\begin{equation}
(\mathcal{B}^{*})^{-1}= Id-{z^{-1}_{\alpha_b}} E_{\alpha_t \alpha_b}.
\end{equation}
Recall that $\eta^{(1)}={}^{t}B(\lambda, \pi)^{-1}\eta$ and observe that
\begin{equation} \label{change of coefficients}
    c_{\pi^{(1)}, \eta^{(1)}}(\alpha)=
\begin{cases}
    c_{\pi, \eta}(\alpha) &  \alpha \neq \alpha_t,  \\
    c_{\pi, \eta}(\alpha) \w{\alpha_{b}} & \alpha=\alpha_t.
\end{cases}
\end{equation}
Notice that since the left endpoint of the $\alpha_b$-interval is identified to the right endpoint of the new $\alpha_t$-interval we have
\begin{equation} \label{change of characteristics}
\one_{S^{(1)}}(\alpha)=
\begin{cases}
\one_{S}(\alpha) & \text{if} \; \alpha \neq \alpha_t,\\
\one_S(\prealpha_b) & \text{if} \; \alpha= \alpha_t.
\end{cases}    
\end{equation}
So 
\begin{itemize}
    \item if $\alpha \neq \alpha_t$ we get
\begin{equation}
     v^{S^{(1)}}_{\pi^{(1)}, \eta^{(1)}}(\alpha)=c_{\pi^{(1)}, \eta^{(1)}}(\prealpha) \one_{S^{(1)}}(\prealpha)-c_{\pi^{(1)}, \eta^{(1)}}(\alpha) \one_{S^{(1)}}(\alpha) = \vs{(\alpha)}.
\end{equation}
\item Next, if $\alpha =\alpha_t$, then by \eqref{change of coefficients}, \eqref{change of characteristics} we have
\begin{align}
    v^{S^{(1)}}_{\pi^{(1)},\eta^{(1)}}(\alpha_t) & = c_{\pi, \eta} (\prealpha_t )\one_{S}(\prealpha_t)- \w{\alpha_b}c_{\pi, \eta}(\alpha_t)\one_S(\prealpha_b )\\
    & = \vs(\alpha_t)+ c_{\pi, \eta}(\alpha_t)\one_S(\alpha_t)-\w{\alpha_b}c_{\pi, \eta}(\alpha_t)\one_S(\prealpha_b).
\end{align}
By the fact that  $\one_S(\alpha_b)=\one_S(\alpha_t)$ and by the definition of $\vs$ 
\begin{equation}
    \vs(\alpha_b)= c_{\pi, \eta}(\prealpha_b)\big(\one_S(\prealpha_b)-\w{\alpha_b}^{-1}\one_S(\alpha_t)\big).
\end{equation} 
Moreover,  
\begin{equation}
    z_{\alpha_b}= c_{\pi, \eta}( \prealpha_b) \times \prod\limits_{\alpha\neq \alpha_b}\w{\alpha}  = c_{\pi, \eta}( \prealpha_b)c_{\pi, \eta}(\alpha_t)^{-1}\w{\alpha_b}^{-1}, 
\end{equation}
and therefore 
\begin{equation}
    v^{S^{(1)}}_{\pi^{(1)}, \eta^{(1)}}(\alpha_t)= \vs(\alpha_t)-z^{-1}_{\alpha_b}\vs(\alpha_b),
\end{equation}
\end{itemize}
which implies equation \eqref{eq:covariant-sections}.

\noindent
\textbf{Bottom operation.} Notice that in this case \(
    \mathcal{B}(\lambda, \pi, \zeta)= Id+ (z_{\alpha_b}-1) E_{\alpha_t \alpha_t}+E_{\alpha_t\alpha_b},
\) and
\begin{equation}
 (\mathcal{B}^{*})^{-1}= Id+(z_{\alpha_b}-1) E_{\alpha_t \alpha_t}-z_{\alpha_b}E_{\alpha_b\alpha_t}.  
\end{equation}
It is easy to verify that 
\begin{equation} 
    c_{\pi^{(1)}, \eta^{(1)}}(\alpha)=
\begin{cases}
    c_{\pi, \eta}(\alpha) &  \alpha \notin \{\alpha_b, \alpha_t\},  \\
    c_{\pi, \eta}(\alpha_b)   \w{\alpha_t} &  \alpha =\alpha_b,\\
    c_{\pi, \eta}(\alpha_b) & \alpha=\alpha_t.
\end{cases}
\end{equation}
As the left endpoint of the $\alpha_t$-edge is identified to the right endpoint of the new $\alpha_b$-edge, we have
\begin{equation}
\one_{S^{(1)}}(\alpha)=
    \begin{cases}
        \one_{S}(\alpha) & \text{if} \;  \alpha \notin \{ \alpha_b, \alpha_t\}\\
        \one_S(\prealpha_t)  & \text{if} \;  \alpha =\alpha_b\\
        \one_S(\alpha_b) & \text{if} \;  \alpha =\alpha_t\\
    \end{cases}
\end{equation} 

So 
\begin{itemize}
    \item if $\alpha \notin\{ \alpha_b,\alpha_t\}$, then
\begin{equation}
    v^{S^{(1)}}_{\pi^{(1)}, \eta^{(1)}}(\alpha)=c_{\pi^{(1)}, \eta^{(1)}}(\prealpha) \one_{S^{(1)}}(\prealpha)-c_{\pi^{(1)}, \eta^{(1)}}(\alpha) \one_{S^{(1)}}(\alpha) = \vs{(\alpha)}.
\end{equation}
\item If $\alpha=\alpha_t$, then 
\begin{align}
    v^{S^{(1)}}_ {\pi^{(1)}, \eta^{(1)}}(\alpha_t) & = c_{\pi^{(1)}, \eta^{(1)}} (\alpha_b )\one_{S^{(1)}}(\alpha_b)- c_{\pi^{(1)}, \eta^{(1)}}(\alpha_t)\one_{S^{(1)}}(\alpha_t)\\
    & = c_{\pi, \eta}(\alpha_b)\big(  \w{\alpha_t} \one_S(\prealpha_t)-  \one_S(\alpha_t)\big).
\end{align}
Notice that $\vs(\alpha_t)= c_{\pi, \eta}(\alpha_t)\w{\alpha_b}  \one_S(\prealpha_t)-\one_S(\alpha_t)$ and $z_{\alpha_b}= c_{\pi, \eta}( \alpha_b) \times c_{\pi, \eta}(\alpha_t)^{-1}$. Hence  
\begin{equation}
v^{S^{(1)}}_{\pi^{(1)},\eta^{(1)}}(\alpha_t)= z_{\alpha_b}\vs (\alpha_t).
\end{equation}
\item Else if $\alpha=\alpha_b$, then 
 as $\one_S(\alpha_t)=\one_S(\alpha_b)$
\begin{align}
    v^{S^{(1)}}_{\pi^{(1)}, \eta^{(1)}}(\alpha_b) & = c_{\pi^{(1)}, \eta^{(1)}} (\prealpha_b)\one_{S^{(1)}}(\prealpha_b)- c_{\pi^{(1)}, \eta^{(1)}}(\alpha_b)\one_{S^{(1)}}(\alpha_b)\\ 
    &= c_{\pi, \eta}(\prealpha_b) \big( \one_S(\prealpha_b)-\w{\alpha_t}\w{\alpha_b}^{-1}\one_S(\prealpha_t)\big) \\
    &= c_{\pi, \eta}(\prealpha_b)\big( \one_S(\prealpha_b)- \w{\alpha_b}^{-1}\one_S(\alpha_b)\big) \\
    & + c_{\pi, \eta}(\alpha_b)\big(\one_S(\alpha_t)- \w{\alpha_t}\one_S(\prealpha_t)\big) \\
    &= \vs(\alpha_b) - z_{\alpha_b} \vs(\alpha_t),
\end{align}
\end{itemize}
proving the desired equality.
\end{proof}

We now define the complex subspaces $N(\pi, \zeta)$ and $\widetilde{N}(\pi, \zeta)$ by
\begin{equation}
    N(\pi, \zeta):=\mathrm{Span}_{\mathbb{C}}\big\{ \vs| S\in \Sigma(\pi)\setminus\{S_0\}\big\}, \quad \widetilde{N}(\pi,\zeta):= N(\pi, \zeta) \oplus \mathbb{C}v_{\pi, \zeta}.
\end{equation}
The fact that the sum above is direct will be proved in Lemma \ref{lem:properties-of-omega}. 

\subsection{Invariant Hermitian and symplectic forms}

In this subsection, we introduce a family of matrices $\Omega_{\pi, \zeta}$ (which are going to be treated as $2$-forms) satisfying a certain invariance property under the twisted cocycle (see \eqref{eq:inv-big-omega} and \eqref{eq:inv-small-omega}). This in turn allows us to establish the existence of invariant symplectic structures under the twisted cocycle, implying the symmetry of its Lyapunov spectrum with respect to zero.





Let $\Omega_{\pi,\zeta}$ be defined by the following formula
\begin{equation}\label{eq:def-Omega}
 \Omega_{\pi, \zeta}(\alpha, \beta):=
\begin{cases}
   0 &  \text{if}\;\;\;  \alpha=\beta,\\
   0 & \text{if}\;\;\;\pi_t(\beta) >\pi_t(\alpha), \pi_b(\beta)> \pi_b(\alpha),\\
   1 & \text{if}\;\;\;  \pi_t(\beta)<\pi_t(\alpha), \pi_b(\beta)>\pi_b(\alpha),\\
   -z_\alpha z^{-1}_\beta & \text{if} \;\;\; \pi_t(\beta)>\pi_t(\alpha), \pi_b(\beta)< \pi_b(\alpha),\\
   1-z_\alpha z^{-1}_\beta & \text{if} \;\;\;\pi_t(\beta)<\pi_t(\alpha), \pi_b(\beta)< \pi_b(\alpha).
\end{cases}
\end{equation}

\begin{prop} \label{inv-Omega}(Invariance of the family $\Omega_{\pi, \zeta}$) 
The following identity 
    \begin{equation}\label{eq:inv-big-omega}
    \mathcal{B}(\lambda, \pi, \zeta) \Omega_{\pi, \zeta} \mathcal{B}^{*}(\lambda, \pi, \zeta)= \Omega_{\pi^{(1)}, \zeta^{(1)}},
\end{equation}
holds on the subspace $W_{\zeta^{(1)}}$. 
\end{prop}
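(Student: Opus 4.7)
The plan is to establish \eqref{eq:inv-big-omega} on $W_{\zeta^{(1)}}$ via a direct entry-wise comparison, splitting the argument into the top-type and bottom-type Rauzy operations. First, I would reformulate the statement: since $W_{\zeta^{(1)}}$ is the hyperplane orthogonal to $s_{\zeta^{(1)}}$, the identity is equivalent to showing that the discrepancy matrix
$A := \mathcal{B}(\lambda,\pi,\zeta)\,\Omega_{\pi,\zeta}\,\mathcal{B}(\lambda,\pi,\zeta)^{*} - \Omega_{\pi^{(1)},\zeta^{(1)}}$
has rank at most one, with every row annihilating $W_{\zeta^{(1)}}$, i.e., proportional to the functional defined by $s_{\zeta^{(1)}}$. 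Concretely, this means verifying that, for each $\alpha$, the $\alpha$-row of $A$ depends on $\beta$ only through the expression $1 - z^{(1)}_\beta$ (up to complex conjugation, harmless since $|z^{(1)}_\beta|=1$).

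In the top case, $\mathcal{B}(\lambda,\pi,\zeta) = I + z_{\alpha_b} E_{\alpha_b \alpha_t}$, so left-multiplication by $\mathcal{B}$ only alters the $\alpha_b$-row (adding a multiple of the $\alpha_t$-row), and right-multiplication by $\mathcal{B}^{*}$ only alters the $\alpha_b$-column. Compared with $\Omega_{\pi^{(1)},\zeta^{(1)}}$, the relevant changes from $\pi$ to $\pi^{(1)}$ are (a) the repositioning of $\alpha_b$ in the bottom ordering (inserted just after $\alpha_t$) and (b) the update $\zeta \mapsto \zeta^{(1)} = {}^{t}\!B(\lambda,\pi)^{-1}\zeta$, which only alters the $\alpha_t$-coordinate. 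One then partitions the pairs $(\alpha,\beta)$ according to whether $\alpha_b$ appears as one of the indices and checks the identity case by case from the defining formula \eqref{eq:def-Omega}.

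The bottom case proceeds symmetrically: here $\mathcal{B}(\lambda,\pi,\zeta) = I + (z_{\alpha_b}-1)E_{\alpha_t \alpha_t} + E_{\alpha_t \alpha_b}$, so only the $\alpha_t$-row and $\alpha_t$-column of $\Omega_{\pi,\zeta}$ are affected by the conjugation, while only the $\alpha_t$-coordinate of $\zeta$ changes under the update. The case split on $(\alpha,\beta)$ runs along the same lines, with $\alpha_t$ now in the role played by $\alpha_b$ in the top case; the two non-identity contributions in $\mathcal{B}$ make the bookkeeping a bit bulkier but not conceptually different.

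The main obstacle is the portion of the case analysis where the entry-wise identity fails outright: entries of $A$ indexed by the winner/loser letter produce residual terms that must be shown to combine exactly into a row proportional to the functional defined by $s_{\zeta^{(1)}}$. This is the algebraic shadow of the fact that, unlike Veech's untwisted $\Omega_\pi$ whose invariance holds on the full ambient space, the twisted form $\Omega_{\pi,\zeta}$ is only invariant after restriction to the hyperplane $W_{\zeta^{(1)}}$. Verifying that these residues collapse to the required shape is where the structural compatibility between the cocycle $\mathcal{B}$, the twisted form $\Omega_{\pi,\zeta}$, and the invariant section $s_\zeta$ genuinely enters, and it is the most delicate step of the proof.
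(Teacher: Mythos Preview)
Your approach matches the paper's: a case-by-case computation of the discrepancy $A := \mathcal{B}\,\Omega_{\pi,\zeta}\,\mathcal{B}^{*} - \Omega_{\pi^{(1)},\zeta^{(1)}}$, showing that its rows are multiples of $s_{\zeta^{(1)}}^{*}$. The paper in fact obtains the sharper closed form $A = z_{\alpha_b}\, e_{\alpha_{\epsilon'}}\, s_{\zeta^{(1)}}^{*}$ (only the loser's row is nonzero), which you would discover in the course of the computation; one correction to your bookkeeping: the toral update is $\zeta^{(1)} = B^{R}(\lambda,\pi)\,\zeta$, not ${}^{t}B^{-1}\zeta$, so in the top case it is the $\alpha_b$-coordinate of $\zeta$ (and hence $z_{\alpha_b}$) that changes, not the $\alpha_t$-coordinate.
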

\begin{proof} Let $\epsilon \in \{t, b\}$ be the type of the renormalization operation (top or bottom) and $\epsilon'$ be such that $\{\epsilon, \epsilon'\}=\{t, b\}$. Then, a case-by-case calculation shows that 
\begin{equation}
     \mathcal{B}(\lambda, \pi, \zeta) \Omega_{\pi, \zeta} \mathcal{B}^{*}(\lambda, \pi, \zeta)-\Omega_{\pi^{(1)}, \zeta^{(1)}}= z_{\alpha_b}e_{\alpha_{\epsilon'}}s^{*}_{\zeta^{(1)}
     },
\end{equation}
where $e_{\alpha}$ stands for the $d$-by-$1$ matrix whose only nonzero coordinate is its $\alpha$-th coordinate which is equal to one. 
\end{proof}

\begin{lemma}\label{lem:properties-of-omega}
$\Omega_{\pi, \zeta}$ satisfies the following properties:
\begin{enumerate}
    \item Skew symmetry restricted to $W_\zeta$
    \begin{equation}\label{Omega+Omega*}
        (\Omega_{\pi, \zeta}+\Omega^{*}_{\pi, \zeta} )W_{\zeta} \subset \langle s_{\zeta} \rangle,
    \end{equation} 
    \item $\Omega_{\pi, \zeta} v_{\pi,\zeta}= s_{\zeta}$, 
    \item $N(\pi, \zeta)= Ker \Omega_{\pi, \zeta}$ is a $(\kappa-1)$-dimensional subbundle invariant under $(\mathcal{B}^*)^{-1}$,
    \item $\widetilde{H}(\pi, \zeta):= \Omega_{\pi, \zeta} W_{\zeta}$ is a $(2g-1)$-dimensional subbundle invariant under $\mathcal{B}$,
     \item 
     $\widetilde{N}(\pi, \zeta)=N(\pi,\zeta)\oplus \langle v_{\pi,\zeta}\rangle =\widetilde{H}(\pi,\zeta)^\perp$ is a $\kappa$-dimensional subbundle invariant under $(\mathcal{B}^*)^{-1}$.
\end{enumerate}
\end{lemma}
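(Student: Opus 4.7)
The plan is to establish the five items in an order that builds from the algebraic identities of items 1 and 2, through the dimension counts for items 3 and 4, and finally to the duality statement of item 5. The invariance assertions are then read off from Propositions \ref{prop:N-span} and \ref{inv-Omega}. The structural statements rest on a handful of telescoping identities involving the explicit formula \eqref{eq:def-Omega} and the definition \eqref{eq:def-v_S} of $v^{S}_{\pi,\eta}$.

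Item 1 reduces to an entrywise calculation. A case analysis over the four non-diagonal cases of \eqref{eq:def-Omega}, combined with $\bar z_\gamma = z_\gamma^{-1}$, collapses the Hermitian symmetrization into the rank-two outer-product form
\begin{equation}
\Omega_{\pi,\zeta} + \Omega^{*}_{\pi,\zeta} = \one \, \one^{*} - z \, z^{*}, \qquad z = (z_\alpha)_{\alpha \in \mathcal{A}}.
\end{equation}
For $w \in W_\zeta$, the orthogonality $\sum_\beta \overline{s_\zeta(\beta)} \, w_\beta = 0$ rewrites as $\sum_\beta w_\beta = \sum_\beta z_\beta^{-1} w_\beta$, which immediately yields $(\Omega_{\pi,\zeta} + \Omega^{*}_{\pi,\zeta}) w = \big(\sum_\beta w_\beta\big) s_\zeta \in \langle s_\zeta\rangle$. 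For item 2 I expand $(\Omega_{\pi,\zeta} v_{\pi,\zeta})_\alpha$ from \eqref{eq:def-v_S}: the sum over $\beta$ partitions along consecutive pairs of the cycle $S_0$ under the $\sigma$-permutation, the four cases of \eqref{eq:def-Omega} combine into a telescoping in which all interior contributions cancel, and what survives is precisely the boundary term $1 - z_\alpha = s_\zeta(\alpha)$ contributed by the distinguished symbol $0 \in S_0$.

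For items 3, 4, and 5 I run the analogous telescoping for $\Omega_{\pi,\zeta} v^{S}_{\pi,\eta}$ with $S \neq S_0$; because $S$ does not contain $0$, the surviving boundary term disappears and $\Omega_{\pi,\zeta} v^{S}_{\pi,\eta} = 0$, placing $N(\pi,\zeta) \subset \ker \Omega_{\pi,\zeta}$. Inspecting the distinct rightmost symbols of the cycles in $\Sigma(\pi) \setminus \{S_0\}$ shows that $\{v^{S}_{\pi,\eta}\}_{S \neq S_0}$ is linearly independent, so $\dim N(\pi,\zeta) = \kappa - 1$. A companion telescoping gives $\langle v^{S}_{\pi,\eta}, s_\zeta\rangle = 0$ for every $S \in \Sigma(\pi)$, placing $\widetilde{N}(\pi,\zeta) \subset W_\zeta$. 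The sum $\widetilde{N}(\pi,\zeta) = N(\pi,\zeta) \oplus \langle v_{\pi,\zeta}\rangle$ is direct and of dimension $\kappa$ because $\Omega_{\pi,\zeta} v_{\pi,\zeta} = s_\zeta \neq 0$ puts $v_{\pi,\zeta}$ outside $N(\pi,\zeta) \subset \ker \Omega_{\pi,\zeta}$. For $u \in \widetilde{N}(\pi,\zeta) \subset W_\zeta$ and $w \in W_\zeta$, item 1 combined with $\Omega_{\pi,\zeta} u \in \langle s_\zeta\rangle$ (respectively $0$ for $u \in N$ and $s_\zeta$ for $u = v_{\pi,\zeta}$) gives $\Omega^{*}_{\pi,\zeta} u \in \langle s_\zeta\rangle = W_\zeta^\perp$, whence $\langle u, \Omega_{\pi,\zeta} w\rangle = \langle \Omega^{*}_{\pi,\zeta} u, w\rangle = 0$. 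This yields $\widetilde{N}(\pi,\zeta) \subset \widetilde{H}(\pi,\zeta)^\perp$, so $\dim \widetilde{H}(\pi,\zeta) \leq d - \kappa = 2g - 1$. The matching lower bound $\dim \widetilde{H}(\pi,\zeta) \geq 2g - 1$ --- equivalently $\ker \Omega_{\pi,\zeta} \cap W_\zeta = N(\pi,\zeta)$ --- requires a separate rank computation, which I would obtain by exhibiting explicit vectors (for instance images of single symbols $e_\alpha \in \mathbb{C}^{\mathcal{A}}$) whose images span a $(2g-1)$-dimensional piece of $\widetilde{H}(\pi,\zeta)$. Once this is in hand, the equalities $\ker \Omega_{\pi,\zeta} = N(\pi,\zeta)$ (item 3), $\dim \widetilde{H}(\pi,\zeta) = 2g - 1$ (item 4), and $\widetilde{N}(\pi,\zeta) = \widetilde{H}(\pi,\zeta)^\perp$ (item 5) all follow at once from matching dimensions.

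Invariance is then painless: $(\mathcal{B}^{*})^{-1}$-invariance of $N(\pi,\zeta)$ and $\widetilde{N}(\pi,\zeta)$ is Proposition \ref{prop:N-span} applied to $S \neq S_0$ and to all cycles, respectively; and $\mathcal{B}$-invariance of $\widetilde{H}(\pi,\zeta)$ follows from Proposition \ref{inv-Omega} by writing $u = \Omega_{\pi,\zeta} w$ with $w \in W_\zeta$, setting $w' := (\mathcal{B}^{*})^{-1} w \in W_{\zeta^{(1)}}$ via \eqref{eq:inv-W}, and computing $\mathcal{B} u = \mathcal{B} \Omega_{\pi,\zeta} \mathcal{B}^{*} w' = \Omega_{\pi^{(1)},\zeta^{(1)}} w' \in \widetilde{H}(\pi^{(1)}, \zeta^{(1)})$. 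The main obstacle I anticipate is twofold: the bookkeeping of the telescoping identities behind item 2, $\Omega_{\pi,\zeta} v^{S}_{\pi,\eta} = 0$, and $\langle v^{S}_{\pi,\eta}, s_\zeta\rangle = 0$ (each requires a careful case-split over the four cases of \eqref{eq:def-Omega} as consecutive pairs of symbols in the cycle are traversed), and the matching rank bound $\mathrm{rank} \, \Omega_{\pi,\zeta} = 2g$ needed to pin $\ker \Omega_{\pi,\zeta}$ exactly on $N(\pi,\zeta)$.
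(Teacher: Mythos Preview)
Your outline tracks the paper's proof closely for items 1, 2, the inclusion $N(\pi,\zeta)\subset\ker\Omega_{\pi,\zeta}$, the containment $\widetilde{N}(\pi,\zeta)\subset W_\zeta$, the orthogonality $\widetilde{N}(\pi,\zeta)\perp\widetilde{H}(\pi,\zeta)$, and the invariance claims. The gap you yourself flag --- the reverse inclusion $\ker\Omega_{\pi,\zeta}\subset N(\pi,\zeta)$, equivalently the rank bound --- is exactly where the paper invests its effort, and its method is \emph{not} to exhibit explicit vectors in the image. Instead the paper parametrizes every $\mu\in\mathbb{C}^{\mathcal{A}}$ as $\mu_\alpha = f(\alpha^{-1})c_{\pi,\eta}(\alpha^{-1}) - f(\alpha)c_{\pi,\eta}(\alpha)$ for a function $f:\{0\}\cup\mathcal{A}\to\mathbb{C}$ with $f(0)=0$, and then proves by induction on $k=\pi_b(\alpha)$ that the equations $(\Omega_{\pi,\zeta}\mu)_\alpha=0$ telescope to $f(\sigma(\alpha^{-1}))=f(\alpha^{-1})$. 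Thus any kernel element corresponds to a $\sigma$-invariant $f$, hence a combination of the indicators $\one_S$, hence lies in the span of the $v^S_{\pi,\eta}$ with $S\neq S_0$ (the condition $f(0)=0$ kills the $S_0$ coefficient). Your proposed route via explicit images of $e_\alpha$ would require establishing linear independence of $2g$ such images for an arbitrary irreducible $\pi$, which is no easier than the kernel argument and has no obvious combinatorial handle.

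A second point: your linear-independence argument for $\{v^S_{\pi,\eta}\}_S$ via ``distinct rightmost symbols'' is too thin. The paper instead compares the $\alpha$-entry of a putative relation $\sum_S c_S v^S_{\pi,\eta}=0$ for each pair of top-adjacent symbols $\beta,\alpha$ with $\pi_t(\alpha)=\pi_t(\beta)+1$, derives $c_{S(\beta)}w_\eta(\alpha)=c_{S(\alpha)}$, and concludes that a nontrivial relation forces $\prod_{i<\pi_t(\beta)\leq\sigma(i)}w_\eta(\beta)=1$ for every $i$, i.e.\ $\zeta\in\mathbb{Z}^d$, contradicting the standing assumption. The hypothesis $\zeta\notin\mathbb{Z}^d$ is genuinely needed here; a purely combinatorial ``rightmost symbol'' argument cannot work because the vectors $v^S_{\pi,\eta}$ degenerate and become linearly dependent at $\zeta=0$.
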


\begin{proof}
Let $z \in \mathbb{C}^\mathcal{A}$ be the column vector whose $\alpha$-th coordinate. From the definition of $\Omega_{\pi, \zeta}$ it follows that 
\begin{equation}
    \Omega_{\pi,\zeta}+ \Omega_{\pi,\zeta}^{*}= J-z.z^{*},
\end{equation}
where $J$ is the matrix of all ones. So now if $\nu\in W_{\zeta}$ then 
\begin{equation}
     (\Omega_{\pi,\zeta}+ \Omega_{\pi,\zeta}^{*})\nu= (J-z.z^{*})\nu
\end{equation}
and the $\alpha$-th coordinate of the above vector would be
\begin{equation}\label{eq: w calculation}
    \sum_{\beta}\nu_\beta-z_\alpha(\sum_{\beta} \nu_\beta z_\beta ^{-1}).
\end{equation}
As $\nu \in W_{\zeta}$ we have $\sum_{\beta} \nu_\beta(1-z_\beta^{-1})=0$. Thus the right-hand side of equation \eqref{eq: w calculation} can be further simplified to $(\sum_{\beta} \nu_\beta)(1-z_\alpha)=(\sum_{\beta} \nu_\beta)s_\zeta(\alpha)$. Repeating the argument for all $\alpha$ finishes the proof of the first statement. 

For the proof of part $2$ and the inclusion $N(\pi, \zeta) \subset Ker \Omega_{\pi, \zeta}$ we observe that 
\begin{equation}\label{eq:Omega-v=S}
\sum_{\beta \in \mathcal{A}}\Omega_{\pi, \zeta}(\alpha, \beta) \vs(\beta)=\sum_{\pi_t(\gamma)<\pi_t(\alpha)} \vs(\gamma)-z_{\alpha}\sum_{\pi_b(\gamma)<\pi_b(\alpha)} v^{S}_{\pi^{-1}, \eta}(\gamma)=\delta_{S,S_{0}} (1-z_{\alpha}), 
\end{equation}
where $\delta_{S,S_0}=1$ if and only if $S=S_0$, and is zero otherwise.

We now proceed to the proof of the other assertion in part $3$, that is, $Ker \Omega_{\pi, \zeta} \subset N(\pi, \zeta)$. Let $f: \{0\} \cup \mathcal{A} \to \mathbb{C}$ be a function defined on the upper vertices of the polygon $P(\lambda, \pi, \tau)$ such that $f(0)=0$ and let $\mu(f):=(\mu_{\alpha})_{\alpha} \in \mathbb{C}^\mathcal{A}$ be given by 
\begin{equation}
    \mu_{\alpha}:= f(\alpha^{-1})c_{\pi,\eta}(\alpha^{-1})-f(\alpha)c_{\pi, \eta}(\alpha).
\end{equation}
We remark that any $\mu\in \mathbb{C}^{\mathcal{A}}$ can be obtained via the above construction. Our goal now is to show that if $\mu \in Ker \Omega_{\pi, \zeta}$, then $f$ must be invariant under the action of the $\sigma$-permutation. To this end, we use induction on $k$ to prove the following

\medskip 

\noindent
\textbf{Claim}. For every $\alpha$ such that $\pi_b(\alpha) \leq k$, $f(\sigma(\alpha^{-1}))=f(\alpha^{-1})$.
\medskip

We first prove the base case where $k=1$. Let $\alpha \in \mathcal{A}$ be such that $\pi_b(\alpha)=1$. Then observe that $\sigma^{-1}(0)=\alpha^{-1}$ and 
\begin{equation}
    0=(\Omega_{\pi, \zeta}\mu)_{\alpha} = \sum_{\pi_t(\gamma)<\pi_t(\alpha)} \mu(\gamma)=f(0)-f(\alpha^{-1})c_{\pi, \eta}(\alpha^{-1}).
\end{equation}
So since $f(0)=0$ the above equation implies that $f(\sigma^{-1}(0))=0$. 

We now assume that the claim holds for all $m <k $ and we show that the claim holds for $k$. Assume now that $\alpha$ is such that $\pi_b(\alpha)=k$. Then
\begin{align}
    0=(\Omega_{\pi, \zeta}\mu)_{\alpha} =\sum_{\pi_t(\gamma)<\pi_t(\alpha)} \mu(\gamma)-
       z_{\alpha} \sum_{\pi_b(\gamma)<\pi_b(\alpha)} \Big(c_{\pi^{-1},\eta}(\gamma^{-1})f(\gamma^{-1})-c_{\pi^{-1},\eta}(\gamma)f(\gamma) \Big)\\
    =(f(\sigma(\alpha^{-1}))-f(\alpha^{-1}))c_{\pi,\eta}(\alpha^{-1}),
\end{align}
where the last equality follows from the fact that if $\pi_b(\gamma_2)=1+\pi_b(\gamma_1)<k$ then $f(\gamma_2^{-1})=f(\gamma_1)$ and therefore the intermediate terms in the second sum telescopically cancel each other. Thus, the claim holds and hence there exist $a_S \in \mathbb{C}$ such that $f= \sum_{S\in \Sigma(\pi)\setminus \{S_{0}\}} a_S \one_S$, which implies $w \in N(\pi, \zeta)$.

To prove part $4$, we apply both sides of equation \eqref{eq:inv-big-omega} to $W_{\zeta^{(1)}}$
\begin{equation}
    \mathcal{B}(\lambda, \pi, \zeta) \Omega_{\pi, \zeta}\mathcal{B}^{*}(\lambda, \pi, \zeta)W_{\zeta^{(1)}}=\Omega_{\pi^{(1)}, \zeta^{(1)}}W_{\zeta^{(1)}}.
\end{equation}
Substituting $W_\zeta$ in place of $\mathcal{B}^{*}(\lambda, \pi, \zeta) W_{\zeta^{(1)}}$ will then imply the desired invariance $\mathcal{B}(\lambda, \pi, \zeta) \widetilde{H}(\pi, \zeta)=\widetilde{H}(\pi^{(1)}, \zeta^{(1)})$. The dimension count follows from the rank formula applied to the restriction of $\Omega_{\pi, \zeta}$ to $W_{\zeta}$ and the fact that the kernel of $\Omega_{\pi, \zeta}$ is $(\kappa-1)$-dimensional. 

We now turn to the proof of $\kappa$-dimensionality of $\widetilde{N}(\pi, \zeta)$ in part $5$. Recall that $\widetilde{N}(\pi, \zeta)$ is spanned by the vectors $\vs$ corresponding to the cycles $S$ of the cycle decomposition of $\sigma$. Assume that there is a linear relation between $\vs$, that is,
\begin{equation} \label{eq: linear independence}
    \sum_{S \in \Sigma} c_S \vs=0.
\end{equation}
Let $\alpha,\beta$ be two symbols whose corresponding intervals are adjacent in the top permutation. In other words, $\pi_t(\alpha)=\pi_t(\beta)+1$. Comparing the $\alpha$-th entries in both sides of \eqref{eq: linear independence}, we get that 
\begin{equation}
    -c_{S(\alpha)}c_{\pi, \eta}(\alpha)+c_{S(\beta)}c_{\pi,\eta}(\beta)=0,
\end{equation}
where, for any symbol $\gamma \in \mathcal{A}$, $S(\gamma) \in \Sigma$ is the cycle passing through the right-end vertex of the interval corresponding to $\pi_t(\gamma)$. Therefore for each $\alpha \in \mathcal
A$ we get
\begin{equation}
    c_{S(\beta)}w_{\eta}(\alpha)=c_{S(\alpha)}.
\end{equation}
This implies that either all $c_S$'s are zero or for every $i \in \{0,1,\ldots,d\}$ 
\begin{equation}
    \prod_{i<\pi_t(\beta)\leq\sigma(i)} w_{\eta}(\beta)=1,
\end{equation}
which in turn implies that $-\Omega\eta=\zeta \in \mathbb{Z}^{d}$ contrary to our standing assumption. We conclude the proof by showing orthogonality of $\widetilde{H}(\pi,\zeta)$ and $\widetilde{N}(\pi, \zeta)$. For this, we first show that $\widetilde{N}(\pi, \zeta) \subset W_{\zeta}$. Let $f:\{0\} \cup \mathcal{A} \to \mathbb{C}$ be a function invariant under $\sigma$ and $\mu=\mu(f)$. 
\begin{equation}
    \langle \mu, s_\zeta \rangle= \sum_{\gamma} \mu_{\gamma}-\sum_{\gamma}\Big(c_{\pi^{-1},\eta}(\gamma^{-1})f(\gamma^{-1})-c_{\pi^{-1},\eta}(\gamma)f(\gamma)  \Big)=0,
\end{equation}
as $f$ is invariant under $\sigma.$  Note that an element in $\tilde{N}(\pi, \zeta)$ is of the form $\mu=\mu(f)$ for some $\sigma-$invariant $f$. Thus, for any $\mu \in \widetilde{N}(\pi, \zeta)$ and any $g \in W_{\zeta}$
\begin{equation}
    \langle \mu, \Omega_{\pi, \zeta} g \rangle= g^{*}\Omega_{\pi, \zeta}^{*} \mu= -g^{*} \Omega_{\pi, \zeta} \mu=0.
\end{equation}
This finishes the proof of the Lemma. 
\end{proof}

\noindent\textbf{Real Cocycle.} 
Let $\iota_\mathbb{R}: \mathbb{C}^{d} \to \mathbb{C}^{2d}$ be the map sending $f$ to $(f, \overline{f})$. We denote the image $\iota_{\mathbb{R}}(\mathbb{C}^d)$ of $\mathbb{C}^d$ under $\iota_{\mathbb{R}}$ by $\real$, and remark that it is a $2d$-dimensional real vector space. The action of $\mathbb{C}$ on $\mathbb{C}^{2d}$ naturally endows $\real$ with a scalar action by $\mathbb{C}$ via $c.(f,\overline{f}):= (cf, \overline{c}\overline{f})$. Henceforth, for every complex subspace $V \subset \mathbb{C}^d$ we call $V_{\mathbb{R}}:= \iota_\mathbb{R}(V)$ the \textit{real subspace} associated to $V$. For every $(\lambda,\pi,\zeta)$, $\mathcal{B}(\lambda, \pi, \zeta)$ induces the linear map $\mathcal{B}_{\mathbb{R}}(\lambda,\pi,\zeta):\real\to \real$ given by 

\begin{equation}\label{eq:real-cocycle}
\mathcal{B}_{\mathbb{R}}(\lambda,\pi,\zeta)(f,\overline{f}):=\big(\mathcal{B}(\lambda,\pi,\zeta)f, \mathcal{B}(\lambda,\pi,-\zeta)\overline{f}\big).
\end{equation}
In other words, the following diagram commutes
  \begin{equation}\label{eq:commute-diagram}
      \begin{tikzcd}
\mathbb{C}^d \arrow[r, "\mathcal{B}"] \arrow[d, "\iota_{\mathbb{R}}"'] & \mathbb{C}^d \arrow[d, "\iota_{\mathbb{R}}"] \\
\real \arrow[r, "\mathcal{B}_{\mathbb{R}}"'] & \real
\end{tikzcd}
  \end{equation}

In this representation, $(\mathcal{B}_{\mathbb{R}}^{t})^{-1}=\iota_\mathbb{R} \circ (\mathcal{B}^{*})^{-1}$. $\mathcal{B}_{\mathbb{R}}(\lambda, \pi, \zeta)$'s induce a linear cocycle on $\Delta \times \mathfrak{R} \times \mathbb{T}^d\times \real$, over $\mathcal{R}_{\mathbb{T}^d}$.

Let $H, H_{\mathbb{R}}$ be the fiber bundles over $\Delta \times \mathfrak{R}\times \mathbb{T}^d$
whose fibers over the point $(\lambda,\pi,\zeta)$ are $\widetilde{H}(\pi, \zeta)/\mathbb{C}s_\zeta$ and $\widetilde{H}_{\mathbb{R}}(\pi,\zeta)/\mathbb{C}(s_\zeta,\overline{s_\zeta})$, respectively.


\begin{prop}\label{prop:symplectic-form}
      $\Omega_{\pi, \zeta}$'s induce a $\mathcal{B}$-invariant family of (continuously varying with $\zeta \in \mathbb{T}^d\setminus \{0\}$) Hermitian forms $\omega_{\pi,\zeta}:\widetilde{H}(\pi,\zeta)\times \widetilde{H}(\pi,\zeta)\to \mathbb{C}$ by 
\begin{equation}\label{eq:inv-small-omega}
\omega_{\pi,\zeta}\big(\Omega_{\pi,\zeta}f,\Omega_{\pi,\zeta}g\big):=\frac{i}{2}\langle \Omega_{\pi,\zeta}f,g\rangle,
 \end{equation}
 where $f,g \in W_\zeta$. The above family induces a (continuously varying with $\zeta \in \mathbb{T}^d\setminus \{0\}$) family of $\mathcal{B}_{\mathbb{R}}$-invariant symplectic (real)-bilinear forms $\omega^{\mathbb{R}}_{\pi, \zeta}: \widetilde{H}_{\mathbb{R}}(\pi, \zeta) \times \widetilde{H}_{\mathbb{R}}(\pi, \zeta) \to \mathbb{R}$ in the following manner

\begin{equation}
\omega^{ \mathbb{R}}_{\pi, \zeta} ((\Omega_{\pi, \zeta} f, \Omega_{\pi, -\zeta}\overline{f}), (\Omega_{\pi, \zeta} g, \Omega_{\pi, -\zeta}\overline{g})):= -\mathrm{Im}\big( \omega_{\pi,\zeta}(\Omega_{\pi, \zeta}f, \Omega_{\pi,\zeta}g)).
\end{equation}
Moreover, this family restricted to $H_{\mathbb{R}}$
is non-degenerate. 

\end{prop}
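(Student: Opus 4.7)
The plan is to assemble the proposition directly from Lemma \ref{lem:properties-of-omega} and Proposition \ref{inv-Omega}, treating the four claims (well-definedness and Hermiticity of $\omega_{\pi,\zeta}$; invariance; descent to $H$; and non-degeneracy of the real form) as essentially independent verifications. First I would check that $\omega_{\pi,\zeta}$ is well-defined on $\widetilde{H}(\pi,\zeta) \times \widetilde{H}(\pi,\zeta)$. If $f,f'\in W_\zeta$ satisfy $\Omega_{\pi,\zeta}f=\Omega_{\pi,\zeta}f'$, then $f-f'\in \ker\Omega_{\pi,\zeta}=N(\pi,\zeta)$ by part (3) of Lemma \ref{lem:properties-of-omega}, which is enough for the first slot. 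For the second slot, if $\Omega_{\pi,\zeta}g=\Omega_{\pi,\zeta}g'$, then $g-g'\in N(\pi,\zeta)\subset \widetilde{N}(\pi,\zeta)=\widetilde{H}(\pi,\zeta)^\perp$ by part (5), so $\langle\Omega_{\pi,\zeta}f,g-g'\rangle=0$. For Hermiticity, I would compute $\omega_{\pi,\zeta}(\Omega f,\Omega g)-\overline{\omega_{\pi,\zeta}(\Omega g,\Omega f)}=\tfrac{i}{2}\langle(\Omega_{\pi,\zeta}+\Omega_{\pi,\zeta}^{*})f,g\rangle$, and conclude it vanishes since $(\Omega_{\pi,\zeta}+\Omega_{\pi,\zeta}^{*})f\in \langle s_\zeta\rangle$ by part (1) while $g\in W_\zeta=\langle s_\zeta\rangle^{\perp}$.

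Next I would establish $\mathcal{B}$-invariance. Writing any element of $\widetilde{H}(\pi,\zeta)$ as $\Omega_{\pi,\zeta}h$ for some $h\in W_\zeta$ and applying \eqref{eq:inv-big-omega}, we have $\mathcal{B}(\Omega_{\pi,\zeta}h)=\Omega_{\pi^{(1)},\zeta^{(1)}}(\mathcal{B}^{*})^{-1}h$ where $(\mathcal{B}^{*})^{-1}h\in W_{\zeta^{(1)}}$ by \eqref{eq:inv-W}. Setting $f=(\mathcal{B}^{*})^{-1}h$, $g=(\mathcal{B}^{*})^{-1}k$ and moving $\mathcal{B}$ to the second slot via the standard identity $\langle \mathcal{B}u,v\rangle=\langle u,\mathcal{B}^{*}v\rangle$ yields the desired equality $\omega_{\pi^{(1)},\zeta^{(1)}}(\mathcal{B}u,\mathcal{B}v)=\omega_{\pi,\zeta}(u,v)$.

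For descent to $H(\pi,\zeta)$, the key observation is $s_\zeta=\Omega_{\pi,\zeta}v_{\pi,\zeta}$ with $v_{\pi,\zeta}\in \widetilde{N}(\pi,\zeta)\subset W_\zeta$ (from part (5) and its proof), giving $\omega_{\pi,\zeta}(s_\zeta,\Omega_{\pi,\zeta}g)=\tfrac{i}{2}\langle s_\zeta,g\rangle=0$ for $g\in W_\zeta$; combined with Hermiticity this shows $\omega_{\pi,\zeta}$ descends to the quotient. Non-degeneracy on $H(\pi,\zeta)$ is nearly tautological: if $\omega_{\pi,\zeta}(\Omega_{\pi,\zeta}f,\Omega_{\pi,\zeta}g)=0$ for every $g\in W_\zeta$, then $\Omega_{\pi,\zeta}f$ is orthogonal to $W_\zeta$, hence lies in $\mathbb{C}s_\zeta$, i.e.\ is zero modulo $\mathbb{C}s_\zeta$.

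For the real form, I would first verify $\overline{\Omega_{\pi,\zeta}}=\Omega_{\pi,-\zeta}$ by inspection of \eqref{eq:def-Omega} (using $|z_\alpha|=1$), so that $\iota_{\mathbb{R}}$ intertwines $\widetilde{H}(\pi,\zeta)$ with $\widetilde{H}_{\mathbb{R}}(\pi,\zeta)$ and the definition of $\omega^{\mathbb{R}}_{\pi,\zeta}$ reads simply as $-\mathrm{Im}\circ \omega_{\pi,\zeta}$. That $-\mathrm{Im}$ of a Hermitian form is an $\mathbb{R}$-bilinear antisymmetric form is standard; invariance under $\mathcal{B}_{\mathbb{R}}$ follows from the invariance of $\omega_{\pi,\zeta}$ via the commutative diagram \eqref{eq:commute-diagram}. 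For non-degeneracy on $H_{\mathbb{R}}$, I would use the classical fact that if $\omega$ is a non-degenerate Hermitian form on a complex space $V$, then $\mathrm{Im}\,\omega$ is non-degenerate on the underlying real space: if $\mathrm{Im}\,\omega(u,v)=0$ for all $v$, apply this to $iv$ to recover $\mathrm{Re}\,\omega(u,v)=0$ as well, hence $u=0$. The only real subtlety, and the step I expect to require the most care, is identifying the quotient $H_{\mathbb{R}}=\widetilde{H}_{\mathbb{R}}/\mathbb{C}(s_\zeta,\overline{s_\zeta})$ with the real underlying space of $H(\pi,\zeta)=\widetilde{H}(\pi,\zeta)/\mathbb{C}s_\zeta$, which is a matter of checking the real dimensions match ($4g-4$ on each side) and that the scalar $\mathbb{C}$-action defined on $\real$ is the one compatible with the complex structure on $\widetilde{H}(\pi,\zeta)$ transported by $\iota_{\mathbb{R}}$.
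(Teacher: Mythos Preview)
Your proposal is correct and follows essentially the same approach as the paper: Hermiticity via part~(1) of Lemma~\ref{lem:properties-of-omega}, invariance via Proposition~\ref{inv-Omega} and \eqref{eq:inv-W}, and non-degeneracy by using the $\mathbb{C}$-action (you replace $v$ by $iv$, the paper replaces $g$ by $cg$) to pass from vanishing of the imaginary part to vanishing of the full pairing. You are in fact more careful than the paper on two points it leaves implicit---well-definedness of $\omega_{\pi,\zeta}$ under change of representatives in $W_\zeta$, and the explicit check that $s_\zeta$ lies in the radical so that the form descends to $H(\pi,\zeta)$.
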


\begin{proof}
We note that continuity is clear from the definition. Let us now first verify the conjugate transposition property, that is, we show 
\begin{equation}
    \omega_{\pi, \zeta}(\Omega_{\pi, \zeta}f, \Omega_{\pi, \zeta} g)=\overline{\omega_{\pi, \zeta}(\Omega_{\pi, \zeta}g, \Omega_{\pi, \zeta} f)}.
\end{equation} 
To this end, we use equation \eqref{Omega+Omega*} to see that 
\begin{equation}
    \overline{\langle \Omega_{\pi, \zeta}f,g\rangle}+\langle \Omega_{\pi, \zeta}g,f\rangle=(g^* \Omega_{\pi, \zeta}f)^{*}+f^*\Omega_{\pi, \zeta}g=f^*(\Omega_{\pi, \zeta}+\Omega^{*}_{\pi, \zeta})g=0,
\end{equation}
as $f,g \in W_{\zeta}= \langle s_{\zeta}\rangle ^{\perp}$.
It follows from Proposition \ref{inv-Omega} that for every $f,g \in W_{\zeta^{(1)}}$
\begin{align}
\omega_{\pi^{(1)}, \zeta^{(1)}}(\mathcal{B}\Omega_{\pi, \zeta}f, \mathcal{B} \Omega_{\pi, \zeta} g)&=\omega_{\pi^{(1)},\zeta^{(1)}}\big(\Omega_{\pi^{(1)},\zeta^{(1)}}\mathcal({B}^*)^{-1}f,\Omega_{\pi^{(1)},\zeta^{(1)}}(\mathcal{B}^*)^{-1}g\big)\\
&=\frac{i}{2}\langle \Omega_{\pi^{(1)},\zeta^{(1)}}(\mathcal{B}^*)^{-1}f, (\mathcal{B}^*)^{-1}g\rangle  =  
\frac{i}{2}\langle \mathcal{B}\Omega_{\pi,\zeta}f, (\mathcal{B}^*)^{-1}g\rangle \\ &= \frac{i}{2} 
\langle \Omega_{\pi,\zeta}f ,g \rangle = \omega_{\pi,\zeta}(\Omega_{\pi,\zeta}f,\Omega_{\pi, \zeta}g) .  
\end{align}
Regarding $\omega^{\mathbb{R}}_{\pi,\zeta}$, bi-linearity is obvious. In order to show that  $\omega^{ \mathbb{R}}_{\pi, \zeta}$ is symplectic, we need to prove that for every $f\in W_\zeta$, $\langle \Omega_{\pi,\zeta} f,f\rangle$ is a purely imaginary number. This follows from the conjugate transposition property of $\omega_{\pi, \zeta}$.
To prove non-degeneracy, assume that $f\in W_\zeta$ is such that \begin{equation}
    \forall g\in W_\zeta, \;\mathrm{Re}(\langle \Omega_{\pi,\zeta} f,g\rangle)=0.
\end{equation} 
Now, replacing $g$ with $cg$ where $c\in \mathbb{C}$ is arbitrary, we deduce that $\mathrm{Re}(\overline{c}\langle \Omega_{\pi,\zeta} f,g\rangle)=0$. This implies that for every $g \in W_\zeta$, $\langle \Omega_{\pi,\zeta} f,g\rangle=0$ and therefore $\Omega_{\pi,\zeta}f\in \langle s_\zeta\rangle$. The invariance of $\omega^{\mathbb{R}}_{\pi,\zeta}$ under $\mathcal{B}_{\mathbb{R}}$ follows from the commutativity of diagram \eqref{eq:commute-diagram} and the invariance of $\omega_{\pi, \zeta}$ under $\mathcal{B}$..
\end{proof}

From the definition of the real cocycle $\mathcal{B}_{\mathbb{R}}$, it can be readily verified that for every $v \in \mathbb{C}^d$ and every $(\lambda, \pi, \zeta)$
\begin{equation}
     \frac{ \|\mathcal{B}_{n}(\lambda, \pi, \zeta).v\|}{\|v\|}=\frac{\|(\mathcal{B}_{\mathbb{R}})_n(\lambda, \pi, \zeta). (v, \overline{v})\|}{\|(v,\overline{v})\|}.
\end{equation}
The second part of Proposition \ref{prop:symplectic-form} regarding the invariance of a family of symplectic forms under $\mathcal{B}_{\mathbb{R}}$ together with the above observation yield the following 

\begin{cor}\label{cor:sym-spec}
    The spectrum of the restriction of the cocycle $\mathcal{B}_{\mathbb{R}}$ to the fiber bundle $H_{\mathbb{R}}$ is symmetric with respect to zero. Every Lyapunov exponent $\chi$ of the cocycle $\mathcal{B}$ restricted to $H$ is also a Lyapunov exponent of $\mathcal{B}_{\mathbb{R}}$ restricted to $H_{\mathbb{R}}$ with twice the multiplicity. Moreover, the Oseledets subbundles of $\mathcal{B}_{\mathbb{R}}$ (restricted to $H_{\mathbb{R}}$) are obtained via applying the map $\iota_{\mathbb{R}}$ to the Oseledets subbundles of $\mathcal{B}$.
\end{cor}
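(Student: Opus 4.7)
The plan is to deduce the three claims by combining the commutative diagram \eqref{eq:commute-diagram}, the norm identity displayed immediately before the corollary, and the invariant non-degenerate symplectic form from Proposition \ref{prop:symplectic-form}. The first two assertions are essentially formal; the symmetry is the classical consequence of the existence of an invariant, non-degenerate symplectic structure on the bundle $H_\mathbb{R}$.

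First I would fix the Oseledets decomposition $H = \bigoplus_\chi E^\chi$ of $\mathcal{B}$ restricted to $H$. By the commutativity of \eqref{eq:commute-diagram}, each subbundle $\iota_\mathbb{R}(E^\chi)$ is $\mathcal{B}_\mathbb{R}$-invariant inside $H_\mathbb{R}$, and the norm identity shows that every nonzero vector of $\iota_\mathbb{R}(E^\chi)$ has Lyapunov exponent exactly $\chi$ for $\mathcal{B}_\mathbb{R}$. Since $\iota_\mathbb{R}$ sends a complex subspace of complex dimension $k$ to a real subspace of real dimension $2k$, and since $H_\mathbb{R} = \iota_\mathbb{R}(H) = \bigoplus_\chi \iota_\mathbb{R}(E^\chi)$ exhausts $H_\mathbb{R}$, the uniqueness of the Oseledets decomposition identifies each $\iota_\mathbb{R}(E^\chi)$ with the Oseledets subbundle of $\mathcal{B}_\mathbb{R}$ at exponent $\chi$. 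This simultaneously gives the third assertion of the corollary and doubles the multiplicities, which is the second assertion.

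For the symmetry, let $H_\mathbb{R} = \bigoplus_\chi E^\chi_\mathbb{R}$ be the Oseledets decomposition just obtained, and let $\omega^\mathbb{R}_{\pi,\zeta}$ be the invariant non-degenerate symplectic family of Proposition \ref{prop:symplectic-form}. For $v \in E^\chi_\mathbb{R}$ and $w \in E^{\chi'}_\mathbb{R}$, invariance gives $\omega^\mathbb{R}(v,w) = \omega^\mathbb{R}((\mathcal{B}_\mathbb{R})_n v, (\mathcal{B}_\mathbb{R})_n w)$ for every $n$; bounding the right side by $C \|(\mathcal{B}_\mathbb{R})_n v\| \cdot \|(\mathcal{B}_\mathbb{R})_n w\|$ and using the Oseledets growth rates, the quantity is controlled by $e^{n(\chi + \chi' + \varepsilon)}$ for every $\varepsilon > 0$. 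Letting $n \to \infty$ forces $\omega^\mathbb{R}(v,w) = 0$ whenever $\chi + \chi' < 0$, and applying the same estimate in negative time handles the case $\chi + \chi' > 0$. Non-degeneracy of $\omega^\mathbb{R}$ then upgrades the resulting pairing between $E^\chi_\mathbb{R}$ and $E^{-\chi}_\mathbb{R}$ to a perfect duality, and in particular $\dim E^\chi_\mathbb{R} = \dim E^{-\chi}_\mathbb{R}$, which is the desired symmetry of the Lyapunov spectrum.

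The step I expect to be the most delicate is the use of negative-time growth estimates needed to rule out pairings $\omega^\mathbb{R}(E^\chi_\mathbb{R}, E^{\chi'}_\mathbb{R})$ with $\chi + \chi' > 0$. This requires working with the natural invertible extension of the toral Zorich map, to which $\mathcal{B}_\mathbb{R}$ lifts as a two-sided cocycle; once this standard extension is put in place, the rest of the symmetry argument is routine and no further estimates beyond the subexponential control of the (constant) symplectic form are needed.
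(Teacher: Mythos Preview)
Your overall strategy matches what the paper has in mind: parts 2 and 3 follow formally from the commutative diagram and the norm identity exactly as you describe, and the symmetry is the standard symplectic-pairing argument. The paper itself states the corollary without proof, invoking Proposition~\ref{prop:symplectic-form} and the norm identity as its justification.

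There is, however, one genuine gap in your argument. You bound $|\omega^{\mathbb{R}}((\mathcal{B}_{\mathbb{R}})_n v, (\mathcal{B}_{\mathbb{R}})_n w)|$ by $C\,\|(\mathcal{B}_{\mathbb{R}})_n v\|\cdot \|(\mathcal{B}_{\mathbb{R}})_n w\|$ and later refer to the ``(constant) symplectic form.'' But $\omega^{\mathbb{R}}_{\pi,\zeta}$ is not constant: it is a family varying with the base point, and Proposition~\ref{prop:symplectic-form} only guarantees continuity on $\mathbb{T}^d\setminus\{0\}$. The constant $C$ in your bound is the operator norm of $\omega^{\mathbb{R}}$ at the $n$-th iterate of the base point, and since the orbit may approach $\zeta=0$ (where the bundle $H_\mathbb{R}$ and the form degenerate), you have no a priori uniform bound on $C$. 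Without controlling this, the conclusion $\omega^{\mathbb{R}}(v,w)=0$ does not follow from $e^{n(\chi+\chi'+\varepsilon)}\to 0$.

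The fix is exactly what the paper mentions in the proof of the Main Theorem: apply Poincar\'e recurrence to compact subsets of $\mathbb{T}^d\setminus\{0\}$. For almost every base point there is a sequence $n_k\to\infty$ along which the orbit stays in a fixed compact set $K\subset \mathbb{T}^d\setminus\{0\}$; on $K$ the family $\omega^{\mathbb{R}}_{\pi,\zeta}$ is uniformly bounded by continuity, and running your estimate along the subsequence $n_k$ (in both time directions on the natural extension, as you already note) gives the vanishing. Once you insert this step, the rest of your argument is correct.
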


    


\section{Proof of Main Theorem}
Here we use the tools developed in the previous section to prove our main theorem. 
\begin{proof}[Proof of Main Theorem]
The symmetry of the spectrum follows from Corollary \ref{cor:sym-spec}, the existence of the family of continuous invariant symplectic forms, and a standard application of Poincare recurrence to compact subsets of $\mathbb{T}^d \setminus \{0\}$. In what follows we will show that $(\mathcal{B}^*)^{-1}$ possesses at least $\kappa+1$ zero exponents. 

    Fix a permutation $\pi$ in the Rauzy class $\mathfrak{R}$ and let $S_0, S_1, \cdots, S_{\kappa-1}$ be an ordering of the set of cycles of the decomposition of $\pi$ into cycles. We will proceed by conjugating the cocycle $(\mathcal{B}^{*})^{-1}$ to one of a special form which will allow us to conclude the existence of $\kappa+1$ zero exponents. To this end, we define a family of matrices $C_\zeta$ depending continuously on $\zeta \in \mathbb{T}_{\pi}^{2g}\setminus\{0\}$, such that
    \begin{equation}\label{eq:Upper block triangular form}
      C_{\zeta^{(1)}}^{-1}(\mathcal{B(\lambda, \pi, \zeta})^{*})^{-1}C_\zeta =   \begin{pNiceArray}{c|c}
\begin{matrix}
1 & & & \\
& u_1 & & \\
& & \ddots & \\
& & & u_{\kappa-1}
\end{matrix}  & \Block{1-1}{\mathlarger{*}} \\
\hline
\Block{1-1}{0} &  \begin{matrix}
\psi & 0 & \cdots & 0 \\
\hline
* & * & \cdots & * \\
\vdots & \vdots & \ddots & \vdots \\
* & * & \cdots & *
\end{matrix}
\end{pNiceArray},
    \end{equation}
where $u_i=u_i(\pi, \lambda, \zeta)$'s have norm $1$ and 
\begin{equation}
    \psi(\lambda, \pi, \zeta)= \frac{\|s_{\zeta}\|^2}{\|s_{\zeta^{(1)}}\|^2}.
\end{equation} 

    To define $C_\zeta$, for every $\zeta \in \mathbb{T}^d\setminus\{0\}$ we provide a basis for the fiber corresponding to $\zeta$.  Let now $\eta \in \mathbb{R}^\mathcal{A} \cap H(\pi)$ be the unique element such that $-\Omega_{\pi} \eta=\zeta$. For every $0\leq i \leq \kappa-1$, let $v_{i}(\zeta):=v^{S_{i}}_{\pi, \eta}$ (see \eqref{eq:def-v_S} for the definition of $\vs$). We now remark that $v_{0}(\zeta), v_{1}(\zeta), \ldots, v_{\kappa-1} (\zeta)$ form a basis of $\widetilde{N}(\pi, \zeta)$ (defined in Lemma \ref{lem:properties-of-omega}). The basis $\big\{v_{0}(\zeta), v_{1}(\zeta), \ldots, v_{\kappa-1} (\zeta), s_\zeta \big\} $ provides a trivialization of the bundle $\widetilde{N}(\pi, \zeta)\oplus \mathbb{C}s_\zeta$ over $\mathbb{T}^{d} \setminus \{0\}$ and therefore, by general principles, it is possible to extend this basis to a continuously varying basis $\big\{ v_{0}(\zeta), v_{1}(\zeta), \ldots, v_{\kappa-1} (\zeta), s_\zeta, w_1, \ldots, w_{2g-2}\big\}$ for the whole fiber over $\zeta$ such that $\big\{ s_\zeta, w_1, \ldots, w_{2g-2}\big\}$ is an orthonormal basis for $\widetilde{H}(\pi, \zeta)$. 
    We take $C_\zeta$ to be the matrix whose columns are respectively, $v_{0}(\zeta), v_{1}(\zeta), \ldots, v_{\kappa-1} (\zeta), s_\zeta, w_1, \ldots, w_{2g-2}$. The equality of the first $\kappa$ columns in \eqref{eq:Upper block triangular form} follows from Lemma \ref{lem:inv-v_S} and Proposition \ref{prop:N-span}. For the $(\kappa+1)$st column, since $s_{\zeta}$ is perpendicular to other elements of the basis, 
    \begin{equation}
       \psi=\frac{ \langle (\mathcal{B}^{*})^{-1} s_\zeta , s_{\zeta^{(1)}}\rangle }{\|s_{\zeta^{(1)}}\|^2}=\frac{ \langle  s_\zeta , \mathcal{B}^{-1}s_{\zeta^{(1)}}\rangle }{\|s_{\zeta^{(1)}}\|^2}= \frac{ \langle  s_\zeta , s_\zeta\rangle }{\|s_{\zeta^{(1)}}\|^2}.
    \end{equation}
    The presence of zero entries in the $(\kappa+1)$st row follows from the invariance of $W_\zeta$ under $\mathcal{B}^*$, see \eqref{eq:inv-W}.


Now, since $\|C_{\zeta}^{\pm 1}\|_{\mathrm{HS}}^2$ restricted to $\mathbb{T}^d$ are rational functions of $z_1,\ldots,z_n$, by general principles (see for instance \cite{Aroundunitcircle}), they are $\log$-integrable on $\mathbb{T}^d$ (with respect to $m_{\mathbb{T}^d}$). So, we deduce that the spectrum of the cocycles $C_{\zeta^{(1)}}^{-1}(\mathcal{B}^*)^{-1}C^{}_\zeta$  and $(\mathcal{B}^*)^{-1}$ are the same.   

Then, the main result of \cite{key1988lyapunov} asserts that the spectrum of block triangular cocycles such as $C_{\zeta^{(1)}}^{-1}(\mathcal{B}^*)^{-1}C^{}_\zeta$ is the union of the spectra of the diagonal blocks. The upper left corner block contributes $\kappa$ zero exponents to the spectrum. The bottom right corner is itself a lower block triangular cocycle decomposing into a one-dimensional block and a $(2g-2)$-dimensional block. The contribution from the one-dimensional block to the spectrum is the growth rate of Birkhoff averages of $\log \psi$, which is zero as

\begin{equation}
\int_{\Delta \times \mathbb{T}^d}\log\psi=\int_{\Delta \times \mathbb{T}^d}\log\|s_{\zeta}\|^2-\int_{\Delta \times \mathbb{T}^d}\log\|s_{\zeta^{(1)}}\|^2=0.    
\end{equation}
 
The integrability of $\log\|s_{\zeta}\|^2$ is a result of $\|s_{\zeta}\|_{\mathrm{HS}}^2$ restricted to $\mathbb{T}^d$ being a polynomial of $z_1,\ldots,z_n$. This establishes the existence of $\kappa+1$ zero exponents.
\end{proof}


To finish the study of the degeneracy of and nondegeneracy of the cocycle in the $g=1$ case, we must mention that the main theorem of the previous work of the authors yields the following 

\begin{thm} (\cite{Rajabzadeh-Safaee})
Let $d>1$ be an integer and $\pi=(\pi_t, \pi_b)$ be a rotation-type permutation. Then, the top Lyapunov exponent of the twisted cocycle with respect to the invariant measure $\mu \times m_{\mathbb{T}^d}$ is positive. 
\end{thm}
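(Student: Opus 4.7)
The statement is the authors' previous result \cite{Rajabzadeh-Safaee}, and the plan is to exhibit an explicit geometric expansion mechanism in the twisted cocycle that persists for generic $\zeta \in \mathbb{T}^d$ with respect to $m_{\mathbb{T}^d}$. This contrasts with the degeneracy established by the Corollary to Theorem A, which concerns the (measure-zero) subtorus $\mathbb{T}^{2g}_\pi = \mathbb{T}^2$ corresponding to $H(\pi)$. The key observation is that for $d>2$, $m_{\mathbb{T}^d}$-generic $\zeta$ lies outside the subtorus $H(\pi)/(H(\pi)\cap\mathbb{Z}^d)$, where additional non-abelian expanding directions become available that are simply not seen by the invariant measure used in Theorem A.

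First, I would exploit the transparent combinatorial structure of the Rauzy class of a rotation-type permutation (which is essentially isomorphic to the Gauss map) to write down an explicit Rauzy loop $\gamma$ returning the permutation $\pi$ to itself. Computing the product of twisted Rauzy-Veech matrices along $\gamma$ yields an explicit matrix $M(\zeta)\in \mathrm{GL}(d,\mathbb{C})$ whose entries are Laurent polynomials in the variables $z_\alpha=e^{2\pi i \zeta_\alpha}$. The heart of the argument is then to show that $M(\zeta)$ has spectral radius strictly greater than $1$ on a positive $m_{\mathbb{T}^d}$-measure subset. The proposed route is to verify by a direct algebraic computation the existence of a single $\zeta_0 \in \mathbb{T}^d \setminus (H(\pi)/(H(\pi)\cap\mathbb{Z}^d))$ at which $M(\zeta_0)$ admits an eigenvalue of modulus strictly greater than $1$, and then extend this to an open neighborhood of positive measure via continuity of the spectral radius.

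Finally, one concludes with a standard package: Kingman's subadditive ergodic theorem applied to $\mathcal{R}_{\mathbb{T}^d}^Z$, the ergodicity of $\mu_Z\times m_{\mathbb{T}^d}$, the $\log$-integrability of $\log\|\mathcal{B}^{\pm 1}\|$ (obtained exactly as in the proof of Theorem A from the fact that the entries are Laurent polynomials, so that $\|\mathcal{B}\|^2$ restricted to $\mathbb{T}^d$ is a rational function in the $z_\alpha$), and a shadowing/persistence argument that transports the hyperbolicity of $M$ from the periodic loop to a neighborhood visited with positive frequency by typical orbits. Together these yield a lower bound $\chi_1 > 0$. The principal obstacle is the explicit algebraic step: one must guarantee that some periodic word in the (combinatorially very constrained) rotation-type Rauzy class actually produces a non-unitary twisted product at some concrete non-rational $\zeta_0$ transverse to $H(\pi)$. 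This verification cannot be bypassed abstractly because, by the Corollary to Theorem A, restricting $\zeta$ to $H(\pi)/(H(\pi)\cap\mathbb{Z}^d)$ forces every such product to act unitarily up to modular phases; so positivity here genuinely hinges on the transverse behavior captured only by $m_{\mathbb{T}^d}$, and the combinatorial search for a hyperbolic loop is the delicate part of the proof.
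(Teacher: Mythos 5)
This theorem is not proved in the present paper at all: it is quoted from the authors' earlier work \cite{Rajabzadeh-Safaee}, so there is no in-paper argument to compare against; the introduction only indicates that the cited proof rests on a geometric construction. Judged on its own terms, your sketch has a genuine gap at its final step. Producing one Rauzy loop $\gamma$ whose twisted product $M(\zeta)$ has spectral radius greater than $1$ at some $\zeta_0$, and hence on an open set of positive $m_{\mathbb{T}^d}$-measure, does \emph{not} yield positivity of the top Lyapunov exponent for $\mu_Z\times m_{\mathbb{T}^d}$. Kingman's theorem gives existence of the exponent, not positivity, and there is no ``shadowing/persistence'' principle that transports hyperbolicity of a single periodic product to a positive exponent: there are cocycles with hyperbolic periodic products whose Lyapunov exponents vanish, because norms of products are not multiplicative over excursions away from the neighborhood. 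The situation is worse here than for a locally constant cocycle, since the fiber coordinate $\zeta$ is itself moved by the integer matrices $B^Z$ and equidistributes on $\mathbb{T}^d$; an orbit of the base that returns to the Rauzy loop $\gamma$ will generically not return with $\zeta$ near $\zeta_0$, so even the ``positive frequency of visits'' premise concerns the full point $(\lambda,\pi,\zeta)$ and buys you nothing about cancellation in between visits. Converting a hyperbolic loop into positivity requires something like the Avila--Viana invariance principle (zero exponents force an invariant measurable conformal structure or family of measures on projective space, to be contradicted by pinching and twisting data), adapted to a cocycle that is not locally constant over the toral extension; this is precisely the difficulty that made positivity a nontrivial problem, and your proposal treats it as routine.

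Two secondary points. First, your reading of the Corollary to Theorem A is too strong: it asserts that the Lyapunov spectrum with respect to the measure supported on $\mathbb{T}^{2g}_\pi$ is degenerate, which is an almost-everywhere asymptotic statement; it does not say that each individual twisted product acts unitarily when $\zeta\in H(\pi)$, so the dichotomy you draw between the two measures needs to be argued, not inferred. Second, the ergodicity of $\mu_Z\times m_{\mathbb{T}^d}$ for the toral extension $\toral^Z$ is itself a nontrivial input that your ``standard package'' assumes without justification.
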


\section{Appendix}
We give explicit examples of self-similar IETs (of genus $1$ and $2$) together with their (twisted) invariant and covariant sections and discuss the application to substitution dynamics. In the examples below, for any $\zeta \in H(\pi)$, we take the unique $\eta \in H(\pi)$ so that $-\Omega_\pi \eta=\zeta$ and let $v^S_{\pi,\zeta}:=v^S_{\pi,\eta}.$  Recall that for $\zeta=(\zeta_\alpha)_{\alpha\in \mathcal{A}} \in \mathbb{R}^\mathcal{A}$, we let $z_\alpha=\exp(2\pi i \zeta_\alpha)$.
\begin{example}
Let $\pi=\left(\begin{smallmatrix}
    ABC\\
    CBA
\end{smallmatrix}\right)$ and consider the Rauzy loop $\gamma$ produced by the following renormalization steps
\begin{equation}
    C>A \rightarrow B>C \rightarrow C>B \rightarrow A>C \rightarrow B>A \rightarrow A>B,
\end{equation} 
where by $C>A$ we mean that the first renormalization operation is of top type. Then the Rauzy renormalization matrix corresponding to $\gamma$ is 
\begin{equation}
B_{\gamma}=
\begin{pmatrix}
    1 & 2 & 2\\
    1 & 4 & 3\\
    1 & 1 & 2
\end{pmatrix}.
\end{equation}
Then the left eigenvector $\lambda:=(3-2\sqrt{2},\sqrt{2}-1,\sqrt{2}-1)$ of $B_\gamma$ yields a length vector corresponding to a self-similar IET (periodic point of order $6$ for $\mathcal{R}_{R}$) whose renormalization combinatorics consists of successively going around the loop $\gamma$. The corresponding twisted cocycle $F:\mathbb{T}^3 \times \mathbb{C}^3 \to \mathbb{T}^3 \times \mathbb{C}^3$ (to the power $6$) is given by
\begin{equation}
    F(\zeta, v):= (B_\gamma \zeta, \mathcal{B}_{\gamma}(\zeta)v)),
\end{equation}
where  
\begin{equation}
    \mathcal{B}_{\gamma}(\zeta)=\mathcal{B}_{6}(\lambda, \pi, \zeta):= \begin{pmatrix}
        1 & z_az_c+z_az_bz_c & z_a+z_az_b^2z_c \\
        1 & z_az_c+z_az_bz_c+z_az_b^2z_c^2+z_az_b^3z_c^2 & z_a+z_az_b^2c+z_az_b^4z_c^2\\
        1 & z_az_c & z_a+z_az_bz_c
    \end{pmatrix}.
\end{equation}
The action of $B_\gamma$ on $\mathbb{T}^{3}$ is multiplicative, that is, 
\begin{equation}
    B_{\gamma}.\zeta= (z_az_b^2z_c^2, z_az_b^4z_c^3, z_az_bz_c^2).
\end{equation}
We now show by explicit calculation that $s_\zeta$ is an invariant section for the twisted cocycle. 
\begin{equation}
  \mathcal{B}_\gamma(\zeta)s_\zeta=\mathcal{B}_\gamma(\zeta)
\begin{pmatrix}
1-z_a \\
1-z_b \\
1-z_c
\end{pmatrix}
=
\begin{pmatrix}
1-z_a z_b^2 z_c^2  \\
1-z_a z_b^4 z_c^3 \\
1-z_a z_b z_c^2 
\end{pmatrix}=s_{B_\gamma\zeta}.
\end{equation}
In this example, the function $g$ defined by
\[
    g(z)=z_az_cz_b^{-1}
\]
is invariant under the action of $B_\gamma$. Henceforth, we concentrate on the action of the twisted cocycle restricted to the sub-torus defined by $\{g(z)=1\}$ and exhibit the covariant sections explicitly (see Section \ref{sec:inv-structure}).
\begin{align}
   \mathcal{B}_\gamma(\zeta)^*v^{S_1}_{\pi,B_{\gamma}\zeta}&= B_\gamma(\zeta)^*
\begin{pmatrix}
    1 \\
    -1 \\
    z_a^{-1}z_b^{-2}z_c^{-2}
\end{pmatrix}= \begin{pmatrix}
    z_a^{-1} z_b^{-2} z_c^{-2} \\ z_a^{-2} z_b^{-2} z_c^{-3} - z_a^{-1} z_b^{-3} z_c^{-2} - z_a^{-1} z_b^{-2} z_c^{-2} \\ z_a^{-2} z_b^{-3} z_c^{-3} - z_a^{-1} z_b^{-4} z_c^{-2} + z_a^{-2} z_b^{-2} z_c^{-2}
\end{pmatrix}\\
&=z_a^{-1}z_b^{-2}z_c^{-2}\begin{pmatrix}
    1 \\ z_a^{-1}  z_c^{-1} -  z_b^{-1} - 1 \\ z_a^{-1} z_b^{-1} z_c^{-1} -  z_b^{-2}  + z_a^{-1} 
\end{pmatrix}=z_a^{-1}z_b^{-2}z_c^{-2}\begin{pmatrix}
    1 \\ - 1 \\   z_a^{-1} 
\end{pmatrix}=(z_a^{-1}z_b^{-2}z_c^{-2})v^{S_1}_{\pi,\zeta},
\end{align}
shows that $v^{S_1}_{\pi, \zeta}$ is a covariant section up to multiplication by a constant of unit norm (see Lemma \ref{lem:inv-v_S} and Proposition \ref{prop:N-span}). Below, we will demonstrate that the distinguished section $v_{\pi,\zeta}$ is covariant. 
\begin{align}
   \mathcal{B}_\gamma(\zeta)^*v_{\pi, \mathcal{B}_\gamma\zeta}&= \mathcal{B}_\gamma(\zeta)^*
\begin{pmatrix}
    -1 \\
    z_az_bz_c^2 \\
    -z_az_bz_c^2
\end{pmatrix}= \begin{pmatrix}
   -1 \\ 
   -z_a^{-1}z_c^{-1}-z_a^{-1}z_b^{-1}z_c^{-1}+z_b^{-2}+z_b^{-1}+z_c
   \\ 
   z_b^{-1}z_c-z_a^{-1}z_b^{-2}z_c^{-1}-z_a^{-1}+z_b^{-3}-z_c
\end{pmatrix}
=\begin{pmatrix}
    -1 \\z_c \\  -z_c
\end{pmatrix}=v_\zeta,
\end{align}
where in all verifications we have used the fact that $z_az_c=z_b$.





\end{example}

\begin{example}\label{ex:4IET} (Rauzy loop for 4-IETs) Take the loop $\gamma$ given by
\begin{equation}
    D>A, D>B, C>D, D>C, A>D, A>C, B>A, A>B,
\end{equation}
where the starting permutation is 
$
    \pi= \left(\begin{smallmatrix}
     A & B & C & D\\
     D & C & B & A
    \end{smallmatrix}\right).$
The corresponding substitution is then 
\begin{align}
    &A \to ADBD\\
    &B \to ADBDBD\\
    &C \to ADCCD\\
    &D \to ADCD
\end{align}
and the corresponding (untwisted) renormalization matrix is 
\begin{equation}
    B_\gamma=\begin{pmatrix}
     1 & 1 & 0 & 2\\
     1 & 2 & 0 & 3\\
     1 & 0 & 2 & 2\\
     1 & 0 & 1 & 2
    \end{pmatrix}.
\end{equation}
As in the previous example, we write the action of $B_\gamma$ on the corresponding torus (in this case $\mathbb{T}^4=\mathbb{R}^4/\mathbb{Z}^4$) multiplicatively.
\begin{equation}
    B_\gamma. (z_a, z_b, z_c, z_d)= (z_az_bz_d^2, z_az_b^2z_d^3, z_a z_c^2z_d^2, z_az_cz_d^2).
\end{equation}
The IET whose renormalization combinatorics follows $\gamma$ infinitely many times is a self-similar IET whose length vector $\lambda$ is the left eigenvector with the largest eigenvalue of $B_\gamma$. 
The corresponding twisted cocycle is
\begin{equation}
    \mathcal{B}_\gamma(\zeta)= \begin{pmatrix}
     1 & z_az_d       &  0  & z_a+z_az_bz_d\\
     1 & z_az_d+z_az_bz_d^2 &  0  & z_a+z_az_bz_d+z_az_b^2z_d^2\\
     1 & 0 & z_az_d+z_az_cz_d & z_a+z_az_c^2z_d\\
     1 & 0 & z_az_d & z_a+z_az_cz_d
    \end{pmatrix}.
\end{equation}
 We check again explicitly the equations for the invariance of $s_\zeta$ and the covariance of $v_{\pi,\zeta}$ under renormalization. 

\begin{align}
    \mathcal{B}_\gamma(\zeta)s_\zeta&=\mathcal{B}_\gamma(\zeta)\begin{pmatrix}
        1-z_a\\
        1-z_b\\
        1-z_c\\
        1-z_d
    \end{pmatrix}=\begin{pmatrix}
        1-z_az_bz_d^2\\
        1-z_az_b^2z_d^3\\
        1- z_a z_c^2z_d^2\\
        1-z_az_cz_d^2
    \end{pmatrix}=s_{{B}_\gamma(\zeta)},\\
\mathcal{B}_\gamma(\zeta)^{*}v_{B_\gamma\zeta}&= 
    \mathcal{B}_{\gamma}(\zeta)^{*}
    \begin{pmatrix}
    z_az_b^2z_c^{-1}z_d^3-1 \\ z_bz_d-z_az_b^2z_c^{-1}z_d^3 \\ z_az_cz_d^2-z_bz_d \\z_bz_c^{-1}z_d-z_az_cz_d^2
    \end{pmatrix}=
\begin{pmatrix}
    z_bz_c^{-1}z_d-1\\
    z_a^{-1}z_b-z_bz_c^{-1}z_d\\
    z_d-z_bz_c^{-1}z_d\\
    z_a^{-1}z_bz_c^{-1}z_d-z_d\\
\end{pmatrix}=v_\zeta.
\end{align}
\end{example}

\medskip

\noindent
\textbf{Application to substitution dynamics.} Motivated by discussions with Boris Solomyak, we apply here our result about the existence of the invariant section in the more general setting of substitutions. The main finding in this subsection is Proposition \ref{prop:Singularity_of_spectrum} which detours lengthy calculations in previous works \cite{Baake, Baake_binary_inflation_rules} to establish the singularity of the spectrum for a large class of substitutions. This proposition appears, among other things, in a somewhat different form in the upcoming work of Solomyak \cite{solomyak_twisted_cocycle} on the Lyapunov spectrum of the twisted cocycle for substitutions\footnote{The proof of this proposition was communicated by the second-named author to Boris Solomyak on 4 Sept 2024.}. 



Let $\mathcal{A}=\{0,1\}$, and $\mathcal{A}^{+}:= \bigcup_{n \in \mathbb{N}} \mathcal{A}^{n}$ be the set of all finite words of the alphabet $\mathcal{A}$. Let $\vartheta: \mathcal{A} \to \mathcal{A}^{+}$ be a substitution. It is well-known that $\vartheta$ induces a 
dynamical system on a zero-entropy invariant subset of the left shift map $\sigma:\mathcal{A}^{\mathbb{N}} \to \mathcal{A}^{\mathbb{N}}$. This dynamical system is often studied via the corresponding renormalization dynamics. The renormalization dynamics is given by a $2$ by $2$ matrix $S_{\vartheta}=\left(\begin{smallmatrix}
    a & b\\
    c & d
\end{smallmatrix}\right)$,
where $a,c$ ($b,d$) are the number of $0$s ($1$s) that appear in $\vartheta(0), \vartheta(1)$, respectively. The definition of the spectral (twisted) cocycle denoted by $\mathcal{B}(\zeta)$ where $\zeta:=(z_1, z_2) \in \mathbb{S}^{!} \times \mathbb{S}^1 \subset \mathbb{C} \times \mathbb{C}$, is then analogous to the one given in Example \ref{ex:4IET} (see \cite{Bufetov-Solomyak-MathZ}). For a recent survey on 
substitutions and their spectral properties, see \cite{BufetovSolomyak2023}.

\begin{prop} \label{prop:Singularity_of_spectrum} Let $\vartheta$ be such that one of the following holds:

\begin{itemize}
    \item Either the characteristic polynomial of $S_\vartheta$
    is irreducible over $\mathbb{Q}$ and the top eigenvalue $\lambda$ of $S_\vartheta$ satisfies \begin{equation}\label{eq:lambda^n>2min}
        \lambda^n > 2\min \{a_n+c_n, b_n+d_n\}
    \end{equation} 
    for some $n \in \mathbb{N}$, where 
     \[S_\vartheta^{n}=\begin{pmatrix}
        a_n & b_n\\
        c_n & d_n
    \end{pmatrix},\]
    \item or $\vartheta$ is a substitution of constant length $q$.
\end{itemize}
    Then the substitution system corresponding to $\vartheta$ has pure singular spectrum.
\end{prop}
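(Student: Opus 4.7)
The plan is to invoke the singularity criterion of Bufetov and Solomyak \cite{Bufetov-Solomyak-MathZ}, which asserts that any strict upper bound of the form $\chi^+(\zeta)<\log\lambda$ on the pointwise top Lyapunov exponent of the spectral (twisted) cocycle $\mathcal{B}$, valid almost everywhere, implies pure singularity of the spectrum of the substitution system. The bulk of the proof therefore consists of establishing such a bound under each hypothesis.

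For the first hypothesis, the central ingredient will be the invariant section $s_\zeta=(1-z_1,\,1-z_2)^t$ provided by Lemma \ref{lem:inv-section}. First I would observe that the entries $P_{ij}(\zeta)$ of $\mathcal{B}_n(\zeta)$ are polynomials in $z_1,z_2$ with nonnegative integer coefficients summing to $(S_\vartheta^n)_{ij}$, so $|P_{ij}(\zeta)|\le(S_\vartheta^n)_{ij}$ pointwise. Assuming without loss of generality $b_n+d_n=\min\{a_n+c_n,\,b_n+d_n\}$, the invariance $\mathcal{B}_n(\zeta)s_\zeta=s_{S_\vartheta^n\zeta}$ unpacks into the identities
\[
P_{i,0}(\zeta)(1-z_1)+P_{i,1}(\zeta)(1-z_2)=1-z_i^{(n)},\qquad i=0,1,
\]
with $z^{(n)}:=S_\vartheta^n\zeta$. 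Solving for the ``larger'' column in terms of the smaller one, whose coefficients sum to at most $b_n+d_n$, produces a pointwise operator-norm estimate of the form
\[
\|\mathcal{B}_n(\zeta)\|\le\frac{2(b_n+d_n)}{|1-z_1|},\qquad \zeta\in\mathbb{T}^2\setminus\{z_1=1\}.
\]

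I would next promote this single-scale estimate to an asymptotic Lyapunov bound by iterating along the $S_\vartheta^n$-orbit of $\zeta$. The irreducibility of the characteristic polynomial of $S_\vartheta$ guarantees ergodicity of $S_\vartheta^n$ on $(\mathbb{T}^2,m)$; Birkhoff's theorem combined with the classical Mahler-measure identity $\int_{\mathbb{T}^2}\log|1-z_1|\,dm=0$ then erases the $|1-z_1|^{-1}$ factor in the limit. The hypothesis $\lambda^n>2\min\{a_n+c_n,\,b_n+d_n\}$ therefore yields
\[
\chi^+(\zeta)\le\frac{1}{n}\log\bigl(2\min\{a_n+c_n,\,b_n+d_n\}\bigr)<\log\lambda
\]
for almost every $\zeta$, closing the first case through the singularity criterion.

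For the constant-length hypothesis the preceding argument degenerates because $a_n+c_n=b_n+d_n=q^n=\lambda^n$; in this regime I would appeal to the classical theorem of Dekking (see also Host and Qu\'effelec) asserting pure singularity of the spectrum for every primitive constant-length substitution, or alternatively restrict the cocycle to the diagonal subtorus $\{z_1=z_2\}$, on which the invariant section $(1-z)(1,1)^t$ forces a near-scalar reduction that can be analyzed by hand. The main obstacle in the first case is the handling of the polar divisor $\{z_1=1\}$, resolved by $\log$-integrability of $\log|1-z_1|$ together with ergodicity of $S_\vartheta^n$; a secondary delicate point is extracting the sharp constant $2$ in the single-scale norm estimate, which is essential in order for the stated hypothesis to apply without slack.
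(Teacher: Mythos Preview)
Your proposal has a genuine gap: the Bufetov--Solomyak criterion (Theorem~2.4 in \cite{Bufetov-Solomyak-MathZ}) requires the strict inequality $\chi^{+}<\tfrac{1}{2}\log\lambda$, not $\chi^{+}<\log\lambda$. Your operator-norm argument, even granting the single-scale estimate $\|\mathcal{B}_n(\zeta)\|\le 2(b_n+d_n)/|1-z_1|$ and the Birkhoff averaging, yields only
\[
\chi^{+}\le \tfrac{1}{n}\log\bigl(2\min\{a_n+c_n,\,b_n+d_n\}\bigr),
\]
which under the hypothesis $\lambda^n>2\min\{a_n+c_n,\,b_n+d_n\}$ gives $\chi^{+}<\log\lambda$ but \emph{not} $\chi^{+}<\tfrac{1}{2}\log\lambda$. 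Bounding the norm directly cannot produce the missing factor $\tfrac{1}{2}$; you are essentially one factor of two short, and the stated hypothesis gives no slack to absorb it.

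The paper closes this gap by exploiting the invariant section $s_\zeta$ in a different way. Since $\mathcal{B}(\zeta)s_\zeta=s_{S_\vartheta\zeta}$ with $\|s_\zeta\|$ log-integrable, one Lyapunov exponent is exactly zero; hence $\chi^{+}=\int_{\mathbb{T}^2}\log|\det\mathcal{B}(\zeta)|\,dm$. Computing the determinant in the basis $(e_1,s_\zeta)$ (respectively $(e_2,s_\zeta)$) expresses $\chi^{+}$ as the logarithmic Mahler measure of an explicit polynomial $P$ whose coefficients are $\pm 1$ and whose number of nonzero terms is at most $2(a+c)$ (respectively $2(b+d)$). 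The $L^2$ inequality for the Mahler measure then delivers $\chi^{+}=m(P)\le\tfrac{1}{2}\log\bigl(2\min\{a+c,\,b+d\}\bigr)$, with the crucial $\tfrac{1}{2}$ coming from Jensen's inequality applied to $\log$. The constant-length case is handled by the same determinant computation together with Gon\c{c}alves' inequality, not by an external appeal to Dekking; your deferral there sidesteps precisely the uniform mechanism the proposition is meant to illustrate.
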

\begin{proof}
    We proceed by showing that the top exponent $\chi^{+}$ of $\tw$ with respect to the Lebesgue measure $m_{\mathbb{T}^2}$ satisfies the following inequality in either case
    \begin{equation}
        \chi^{+} <\frac{1}{2}\log(\lambda).
    \end{equation}
    The singularity of the spectrum then follows from \cite[Theorem 2.4]{Bufetov-Solomyak-MathZ}.
    Let $s_\zeta$ be as in \eqref{eq:def-inv-bundle}. Notice that the invariance of $s_\zeta$ under the twisted cocycle $\mathcal{B}$ implies that one of the Lyapunov exponents of the twisted cocycle $\mathcal{B}$ (with respect to the Lebesgue measure on $\mathbb{T}^2$) is zero and the other is given by 
    \begin{equation}
        \chi^{+}=\int_{\mathbb{T}^2} \log |\det \mathcal{B}(\zeta)|\, {d}m_{\mathbb{T}^2}(\zeta).
    \end{equation}
    Without loss of generality, we may now assume that equation \eqref{eq:lambda^n>2min} is satisfied for $n=1$. Denoting the twisted matrix by\[ 
        \tw(\zeta):=\begin{pmatrix}
            a(z_0,z_1) & b(z_0, z_1)\\
            c(z_0,z_1) & d(z_0, z_1)
 \end{pmatrix},\]
     we calculate the determinant by taking a basis consisting of $e_1, s_\zeta$ and we get 
    \begin{equation}
          \det \tw(z_0, z_1)=\frac{\det 
          \begin{pmatrix}
              a(z_0,z_1) & 1-z^{a}_0z^{b}_1\\
              c(z_0, z_1) & 1-z^{c}_{0}z^{d}_1
          \end{pmatrix}}{\det 
          \begin{pmatrix}
              1 & 1-z_0\\
             0 & 1-z_1
          \end{pmatrix}}= \frac{1}{1-z_1}\det 
          \begin{pmatrix}
              a(z_0,z_1) & 1-z^{a}_0z^{b}_1\\
              c(z_0, z_1) & 1-z^{c}_{0}z^{d}_1
          \end{pmatrix},
    \end{equation}
     where we used the fact that
     \[ \tw(\zeta)s_\zeta= s_{S_\vartheta \zeta}= \begin{pmatrix}
          1-z^{a}_0z^{b}_1\\
           1-z^{c}_{0}z^{d}_1
     \end{pmatrix}.\]
     The logarithmic Mahler measure\footnote{For a nonzero polynomial $P \in \mathbb{C}[z_1,\dots, z_n]$, the logarithmic Mahler measure of $P$ is given by the following well-defined integral
     $m(P)=\int_{\mathbb{T}^d} \log |P(e^{2\pi i x_1}, \dots, e^{2\pi x_n})|\, {d}m_{\mathbb{T}^d}(x_1, \dots, x_n)$. For basic properties of the Mahler measure see the book \cite{Aroundunitcircle}.
     } of $1-z_1$ being zero, we get that $\chi^{+}=m(P)$, where 
    \begin{equation}
     P(z_0,z_1)= a(z_0,z_1)+z^{a}_0z^{b}_1c(z_0,z_1) - c(z_0,z_1)- z^{c}_{0}z^{d}_1a(z_0, z_1).
    \end{equation}
    By the well-known $L^2$ inequality for the logarithmic Mahler measure, we have
    \begin{equation}
        \chi^{+}=m(P) \leq \frac{1}{2} \log (2(a+c)).
    \end{equation}
    Applying the same idea to the vector $e_2$ in place of $e_1$ we get
    \begin{equation}
        \chi^{+} \leq \frac{1}{2}\log \min \{2(a+c), 2(b+d)\}<\frac{1}{2}\log(\lambda).
    \end{equation}
     In the  constant length-$q$ case, the calculation of the exponent boils down to the calculation of the Mahler measure of $a(z,z)-c(z,z)$
    which is a polynomial with coefficients in $\{-1,0,1\}$ and of degree at most $q$. It is then easy to see that by Gonçalves inequality (see \cite{Aroundunitcircle}) 
    \begin{equation}
       \chi^+=m(P) < \frac{1}{2}\log(q)<\frac{1}{2}\log(\lambda),
    \end{equation}
    finishing the proof. 
\end{proof}

\end{document}